\documentclass[final,leqno]{siamltex}
\usepackage{amsmath,amssymb,bm}
\title{Existence and non-existence of global solutions for a discrete semilinear heat equation}
\author{Keisuke~Matsuya\footnotemark[2]  and Tetsuji~Tokihiro\footnotemark[2]}
\begin{document}
\maketitle
\pagestyle{myheadings}
\thispagestyle{plain}
\markboth{KEISUKE~MATSUYA AND TETSUJI~TOKIHIRO}{GLOBAL SOLUTIONS FOR A DISCRETE SEMILINEAR HEAT EQ}
\renewcommand{\thefootnote}{\fnsymbol{footnote}}
\footnotetext[2]{Graduate School of Mathematical Sciences, University of Tokyo, Komaba 3-8-1, Meguro, Tokyo
153-8914, Japan~(matsuya@ms.u-tokyo.ac.jp, toki@ms.u-tokyo.ac.jp).}
\renewcommand{\thefootnote}{arabic{footnote}}
\begin{abstract}
Existence of global solutions to initial value
problems for a discrete analogue of a $d$-dimensional
semilinear heat equation is investigated. We prove
that a parameter $\alpha$ in the partial
difference equation plays exactly the same role as the
parameter of nonlinearity does in the semilinear heat equation.
That is, we prove non-existence of a non-trivial global solution for $0<\alpha \le 2/d$, and, for $\alpha > 2/d$, existence of non-trivial global solutions  for sufficiently small initial data.
\end{abstract}
\begin{keywords}
discretization, semilinear heat equation, global solution
\end{keywords}
\begin{AMS}
35K58, 39A12, 39A14, 74G25
\end{AMS}
%
%
\section{Introduction}
The blowing up of solutions to the semilinear heat equation has been analysed extensively since the pioneering work by Fujita\cite{Fujita, Bebernes_Eberly, Levine}.
Fujita studied the initial value problem of the equation:
\begin{equation}
\begin{cases}
\displaystyle{\frac{\partial f}{\partial t} = \Delta f +f^{1+\alpha}} \ (\alpha >0)\\
f(0,\vec{x}) = a(\vec{x}) \geq 0 \ \ (a(\vec{x}) \not\equiv 0) \label{eq:1.1}
\end{cases}
,
\end{equation}
where $f:=f(t,\vec{x})\ (t \geq 0,\vec{x} \in \mathbb{R}^d)$ and $\Delta$ is the $d$-dimensional Laplacian \ 
$
\Delta := \sum\limits_{k=1}^{d}{\frac{\partial ^2}{\partial x_k^2}}.
$
When the initial value $a(\vec{x})$ is continuous and unifomly bounded, there is a smooth solution for $t>0$ and whenever the solution is bounded, the solution is prolongable.
Moreover, since $a(\vec{x})\geq 0$, the solution satisfies $f(t,\vec{x})\geq 0$.
A feature of \eqref{eq:1.1} is that the solution is not necessarily bounded for all $t\geq 0$.
This fact is easily understood if one considers spatially uniform initial condition, $a(\vec{x})\equiv a \in \mathbb{R}_+$.
In this case,  $f(t,\vec{x})=f(t)$ and \eqref{eq:1.1} becomes an ordinary differential equation, 
\begin{equation}\label{eq:1.3}
\begin{cases}
\displaystyle{\frac{d f}{d t} = f^{1+\alpha}} \\
f(0)=a > 0
\end{cases}
.
\end{equation}
The solution to this differential equation is
\begin{equation*}
\displaystyle{f(t) = \frac{\alpha^{-1/\alpha}}{(\alpha^{-1}a^{-\alpha}-t)^{1/\alpha}}},
\end{equation*}
and we see that it diverges as $t \rightarrow \alpha^{-1}a^{-\alpha}-0$.
In general, if there exists a finite time $T \in \mathbb{R}_+$ and if the solution of $(\ref{eq:1.1})$ in $(t,\vec{x})\in[0,T)\times\mathbb{R}^d$ satisfies
\begin{equation*}
\limsup\limits_{t\to T-0}{\| f(t,\cdot)\|_{L^{\infty}}}=\infty,
\end{equation*}
where
\begin{equation*}
\| f(t,\cdot)\|_{L^\infty}:=\sup\limits_{\vec{x}\in\mathbb{R}^d}{|f(t,\vec{x})|},
\end{equation*}
then we say that the solution of $(\ref{eq:1.1})$ blows up at time $T$, and therefore, that it is not a global solution (in time). 
In 1966, Fujita\cite{Fujita} proved the following theorem

\begin{theorem}\label{th1.1}
\begin{remunerate}
\item[$(1)$] If $0 < \alpha < 2/d$, any solution to $(\ref{eq:1.1})$ is not a global solution in time.
\item[$(2)$] If $2/d< \alpha$, then global solutions to $(\ref{eq:1.1})$ do exist for sufficiently small and smooth initial functions $a(\vec{x})$.
\end{remunerate}
\end{theorem}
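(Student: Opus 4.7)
The plan is to work in the Duhamel (mild-solution) formulation
\[
f(t,\vec{x}) = (G_t * a)(\vec{x}) + \int_0^t \bigl(G_{t-s} * f(s,\cdot)^{1+\alpha}\bigr)(\vec{x})\,ds,
\]
where $G_t(\vec{x}) = (4\pi t)^{-d/2}\exp(-|\vec{x}|^2/(4t))$ is the Gaussian heat kernel. This formulation guarantees a non-negative local solution with $f(t,\vec{x})\ge (G_t*a)(\vec{x})$ and interfaces well with both the blow-up argument in (1) and the comparison-principle argument in (2).

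For part~(1), I would adopt Fujita's integration trick. Fix $T>0$ and test the equation against the time-dependent kernel $\phi(t,\vec{x}):=G_{T+1-t}(\vec{x})$, which satisfies $\phi_t+\Delta\phi=0$. Setting $F(t):=\int_{\mathbb{R}^d} f(t,\vec{x})\phi(t,\vec{x})\,d\vec{x}$, the Laplacian contributions cancel after integration by parts and leave
\[
F'(t) = \int_{\mathbb{R}^d} f(t,\vec{x})^{1+\alpha}\,\phi(t,\vec{x})\,d\vec{x}.
\]
Since $\phi(t,\cdot)$ is a probability density on $\mathbb{R}^d$, Jensen's inequality applied to the convex map $u\mapsto u^{1+\alpha}$ yields $F'(t)\ge F(t)^{1+\alpha}$. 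Integrating this Bernoulli-type ODE forces $F$ to become infinite before time $T$ whenever $T\,F(0)^\alpha>1/\alpha$. Since $F(0)=\int_{\mathbb{R}^d} a(\vec{x})\,G_{T+1}(\vec{x})\,d\vec{x}\gtrsim (T+1)^{-d/2}$ for $a\not\equiv 0$ with suitable decay, the product $T\,F(0)^\alpha\gtrsim T^{\,1-d\alpha/2}$ diverges as $T\to\infty$ \emph{precisely} when $\alpha<2/d$, contradicting global existence.

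For part~(2), I would construct an explicit non-trivial global supersolution of Gaussian shape,
\[
g(t,\vec{x}) := \varepsilon(1+t)^{-\beta}\exp\!\Bigl(-\tfrac{|\vec{x}|^2}{4(1+t)}\Bigr),
\]
with exponent $\beta$ chosen in the window $1/\alpha\le\beta<d/2$, which is non-empty exactly because $\alpha>2/d$. A direct computation gives
\[
g_t-\Delta g = \varepsilon\bigl(\tfrac{d}{2}-\beta\bigr)(1+t)^{-\beta-1}\exp\!\Bigl(-\tfrac{|\vec{x}|^2}{4(1+t)}\Bigr),
\]
whereas $g^{1+\alpha}$ carries the faster time decay $(1+t)^{-\beta(1+\alpha)}$ and a narrower Gaussian. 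Taking $\beta=1/\alpha$ and $\varepsilon$ small enough that $\varepsilon^{\alpha}\le d/2-1/\alpha$ makes $g_t-\Delta g\ge g^{1+\alpha}$ pointwise, so the parabolic comparison principle (applied to $g-f$) gives $0\le f\le g$ for every $t\ge 0$ whenever $0\le a\le g(0,\cdot)$, which is the required global existence for small data.

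The hard part is the supersolution construction in part~(2): the two constraints $\beta<d/2$ and $\beta\ge 1/\alpha$ are simultaneously feasible only when $\alpha>2/d$, so the critical exponent is built directly into the geometry of the argument and any weakening of either side collapses the proof. In part~(1) the subtle step is justifying the integration by parts on the true Duhamel solution rather than on a formal one, which requires a priori decay control on $f(t,\cdot)$; in the discrete analogue developed in the rest of the paper, I expect these to be replaced by summability hypotheses on the lattice values.
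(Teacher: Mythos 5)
Your proposal is essentially correct, but note first that the paper does not prove Theorem \ref{th1.1} at all: it is quoted as Fujita's classical 1966 result, and the authors' own contribution is the discrete analogue (Theorem \ref{th2.1}), proved in Sections \ref{sec3}--\ref{sec5}. Your part (1) is the classical Fujita/Kaplan test-function argument (pairing with the backward heat kernel and applying Jensen's inequality to get $F'\ge F^{1+\alpha}$), and your part (2) is the standard self-similar Gaussian supersolution with the exponent window $1/\alpha\le\beta<d/2$; both are sound, modulo the integration-by-parts justification you already flag, which is fine here because the paper's notion of blow-up presupposes $\|f(t,\cdot)\|_{L^\infty}<\infty$ on any interval of existence, so the Gaussian decay of the test function kills the boundary terms. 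The instructive comparison is with the method the paper actually uses for its discrete version, which follows Meier rather than Fujita: for $\alpha<2/d$ they build a subsolution $\underline{f}^\tau_{\vec{n}}=h^\tau_{\vec{n}}/\{1-\tau(h^\tau_{\vec{n}})^\alpha\}^{1/\alpha}$ from the free evolution $h^\tau$ of the data and show it must cease to exist because $\tau(h^\tau_{\vec{n}})^\alpha\sim C^\alpha\tau^{1-d\alpha/2}\to\infty$, while for $\alpha>2/d$ they build a supersolution $\bar{f}^\tau_{\vec{n}}=h^\tau_{\vec{n}}/\{1-\sum_k(m_k)^\alpha\}^{1/\alpha}$ whose denominator stays positive because $\sum_k k^{-d\alpha/2}$ converges for small data. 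That route uses only the $t^{-d/2}$ decay of the linear semigroup plus monotonicity of the nonlinearity, which is why it transfers verbatim to the lattice (no integration by parts, no Jensen step against a backward kernel, both of which would need discrete surrogates in your part (1)). Your supersolution in (2) is close in spirit to theirs --- in both arguments the Fujita exponent appears as the threshold for convergence of $\int^\infty t^{-d\alpha/2}\,dt$ --- though your smallness condition is domination by a Gaussian profile whereas the paper's discrete statement uses smallness of the $l^1$ norm.
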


A remarkable point in this theorem is that the parameter $\alpha$ affects the existence of the global solution of $(\ref{eq:1.1})$. 
The critical value $2/d$ in this theorem is called the Fujita exponent. 
The case $\alpha = 2/d$ was studied by Hayakawa for $d=1,2$, and by Kobayashi, Sirao, Tanaka\cite{Kobayashi_Sirao_Tanaka}, and Weissler\cite{Weissler} for general $d$.
They proved,

\begin{theorem}\label{th1.2}
For the case $\alpha = 2/d$, there exists no global solution to \eqref{eq:1.1} .
\end{theorem}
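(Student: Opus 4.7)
The plan is to argue by contradiction: assume a non-trivial non-negative global solution $f$ exists, and combine (i) a single step of Duhamel iteration, which at the critical exponent squeezes out a logarithmic improvement on the trivial lower bound $f \geq e^{t\Delta}a$, with (ii) a weighted-integral functional of moving-Gaussian type that blows up once the integral is sufficiently large.

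\textbf{Step 1. Logarithmic improvement of the Duhamel bound.} The mild formulation
\[
f(t,\vec{x}) = (e^{t\Delta}a)(\vec{x}) + \int_0^t \bigl(e^{(t-s)\Delta} f(s,\cdot)^{1+\alpha}\bigr)(\vec{x})\,ds
\]
together with $f \geq 0$ gives $f(t,\vec{x}) \geq (e^{t\Delta}a)(\vec{x})$. Since $a \geq 0$ is non-trivial, for $t$ large one has $(e^{t\Delta}a)(\vec{x}) \geq c_0\,t^{-d/2}$ on the parabolic region $|\vec{x}| \leq \sqrt{t}$. The critical choice $\alpha = 2/d$ forces $(1+\alpha)\cdot(d/2) = d/2 + 1$, so substituting this lower bound into the Duhamel integral, evaluating the convolution with the Gaussian kernel over $\{|\vec{y}| \leq \sqrt{s}\}$ and integrating in $s \in [1,t/2]$ produces exactly a logarithmic factor:
\[
f(t,\vec{x}) \;\geq\; c_1\,t^{-d/2}\,\log t,\qquad |\vec{x}| \leq \sqrt{t},\ t \geq t_1 .
\]

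\textbf{Step 2. Forcing blow-up of a scalar functional.} For parameters $T_0 > 0$ and $T > 0$ set
\[
\Phi(t) := \int_{\mathbb{R}^d} f(T_0+t,\vec{x})\,G(T-t+1,\vec{x})\,d\vec{x},\qquad 0 \leq t \leq T,
\]
with $G(\tau,\vec{x}) = (4\pi\tau)^{-d/2}\exp(-|\vec{x}|^2/4\tau)$. Using $\partial_\tau G = \Delta G$ and integration by parts, the two linear contributions cancel, leaving
\[
\Phi'(t) \;=\; \int f(T_0+t,\vec{x})^{1+\alpha}\,G(T-t+1,\vec{x})\,d\vec{x}.
\]
Since $G\,d\vec{x}$ is a probability measure, Jensen's inequality yields $\Phi'(t) \geq \Phi(t)^{1+\alpha}$, so $\Phi$ must blow up by time $\alpha^{-1}\Phi(0)^{-\alpha}$. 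Choosing $T = T_0$ and using Step 1,
\[
\Phi(0) \;\geq\; c_1\,T_0^{-d/2}\,\log T_0 \cdot \int_{|\vec{x}|\leq\sqrt{T_0}} G(T_0+1,\vec{x})\,d\vec{x} \;\gtrsim\; T_0^{-d/2}\,\log T_0 ,
\]
whereas the critical threshold is $(\alpha T_0)^{-1/\alpha} = (d/2)^{d/2}\,T_0^{-d/2}$. For $T_0$ sufficiently large the former dominates the latter, hence $\Phi$ (and therefore $f$) blows up before $t = T_0$, contradicting globality.

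\textbf{Main obstacle.} The technical heart of the argument is the convolution estimate in Step 1. The exponent $\alpha = 2/d$ is precisely the value at which $(1+\alpha)\cdot(d/2) = d/2 + 1$ and the Duhamel integrand becomes of order $s^{-1}$ after the Gaussian smearing, so any weaker bookkeeping misses the logarithmic gain that drives the contradiction. Controlling the Gaussian tails uniformly over $|\vec{x}| \leq \sqrt{t}$ while keeping explicit track of the constant $c_1$ is what distinguishes this critical proof from Fujita's subcritical argument in Theorem~\ref{th1.1}; once it is in place, Step 2 is a short computation.
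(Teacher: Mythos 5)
Your argument is essentially correct, but note that the paper does not actually prove Theorem~\ref{th1.2}: it is quoted from Hayakawa, Kobayashi--Sirao--Tanaka and Weissler, and what the paper proves in section~5 is the \emph{discrete} analogue, part~(2) of Theorem~\ref{th2.1}. Your route is the classical Hayakawa-style one: a single Duhamel iteration at the critical exponent, where $(1+\tfrac{2}{d})\cdot\tfrac{d}{2}=\tfrac{d}{2}+1$ turns the smeared integrand into $s^{-1}$ and yields the gain $f(t,\cdot)\gtrsim t^{-d/2}\log t$ on the parabolic ball, followed by the Gaussian-weighted functional $\Phi$ with $\Phi'\ge\Phi^{1+\alpha}$ by Jensen, and a comparison of $\Phi(0)\gtrsim T_0^{-d/2}\log T_0$ against the survival threshold $(\alpha T_0)^{-1/\alpha}\asymp T_0^{-d/2}$. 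The computations check out (the cancellation of the linear terms via $\partial_\tau G=\Delta G$, the lower bound on $\Phi(0)$, and the ODE blow-up time are all right), modulo the routine justifications that $f\ge e^{t\Delta}a$ and that the integration by parts against the Gaussian weight is legitimate for a bounded classical solution. The paper's strategy for the discrete counterpart is genuinely different and closer to Weissler: it first extracts an a priori bound $\|f^\tau\|_1<\tfrac12(4\pi/d)^{d/2}$ for any global solution (Proposition~\ref{prop6.2}, using the subsolution $\underline{f}$ and the time-translation invariance of globality), and then shows from the Duhamel representation \eqref{eq:6.1} and $H(g)\ge\alpha^{-1}g^{1+\alpha}$ that $\|f^\tau\|_1\gtrsim\sum_{s\le\tau}\sum_{\vec{n}}(U^s_{\vec{n}})^{1+2/d}\sim\sum_{s\le\tau}s^{-1}$, a divergent harmonic sum. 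Both proofs hinge on the same arithmetic coincidence at $\alpha=2/d$; your version localizes the divergence in a pointwise lower bound and a convex functional, while the paper's version localizes it in the $l^1$ norm, which meshes better with the lattice setting where Jensen against a moving heat kernel is less convenient.
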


In numerical computation of \eqref{eq:1.1}, one has to discretize it and consider a partial difference equation.
A naive discretization would be to replace the $t$-differential with a forward difference and the Laplacian with a central difference such that \eqref{eq:1.1} turns into
\begin{equation*}
\displaystyle{\frac{f^{\tau+1}_{\vec{n}}-f^\tau_{\vec{n}}}{\delta} = \sum\limits_{k=1}^d{\frac{f^\tau_{\vec{n}+\vec{e}_k}-2f^\tau_{\vec{n}}+f^\tau_{\vec{n}-\vec{e}_k}}{\xi^2}}+(f^\tau_{\vec{n}})^{1+\alpha}},
\end{equation*}
where $f(\tau,\vec{n})(=:f^\tau_{\vec{n}}):\mathbb{Z}_{{} \geq 0} \times \mathbb{Z}^d \to \mathbb{R}$, for positive constants $\delta$ and  $\xi$Cand where $\vec{e}_k \in \mathbb{Z}^d$\ is the unit vector whose $k$th component is $1$ and whose other components are $0$.
Putting $\lambda := \delta/\xi^2$, we obtain
\begin{equation}\label{eq:1.2}
f^{\tau+1}_{\vec{n}} = 2d\lambda\hat{M}(f^\tau_{\vec{n}})+(1-2d\lambda)f^\tau_{\vec{n}} + \delta (f^\tau_{\vec{n}})^{1+\alpha} \quad (\alpha>0).
\end{equation}
Here
\begin{equation}\label{def:M}
\hat{M}(V_{\vec{n}}) := \displaystyle{\frac{1}{2d}}\sum\limits_{k=1}^{d}{(V_{\vec{n}+\vec{e}_k}+V_{\vec{n}-\vec{e}_k})}.
\end{equation}
For a spatially uniform initial condition, $(\ref{eq:1.2})$ becomes an ordinary difference equation
\begin{equation*}
f^{\tau+1} = f^\tau + \delta (f^\tau)^{1+\alpha}.
\end{equation*}
The above equation is a discretization of \eqref{eq:1.3}, but the features of its solutions are quite different.
In fact, $f^\tau$ will never blow up at finite time steps.
Hence, \eqref{eq:1.2} does not preserve the global nature of the original semilinear heat equation $(\ref{eq:1.1})$. 

In this article, we propose and investigate a discrete analogue of $(\ref{eq:1.1})$ which does keep the important characteristic of existence and non-existence of the global solutions in time.
In section 2, we present a partial difference equation with a parameter $\alpha$ whose continuous limit equals to  \eqref{eq:1.1}, and state the main theorem which shows that this difference equation has exactly the same properties as \eqref{eq:1.1} with respect to $\alpha$. 
This theorem is proved in section 3 for the case $0<\alpha<2/d$. In section 4 for $2/d<\alpha$ and  in section 5 for $\alpha=2/d$.
%
%
\section{Discretization of the semilinear heat equation}\label{sec2}

We consider the following initial value problem for the partial difference equation
\begin{equation}
\begin{cases}
\displaystyle{f^{\tau+1}_{\vec{n}}= \frac{g^\tau_{\vec{n}}}{\{ 1 - (g^\tau_{\vec{n}})^{\alpha} \} ^{1/\alpha}}}  \quad (\tau \in \mathbb{Z}_{{} \geq 0},\ \vec{n} \in \mathbb{Z}^d)\\
f^{0}_{\vec{n}} = a (\vec{n}) \ge 0 \quad (a (\vec{n})  \not\equiv 0)\label{eq:2.1},
\end{cases}
\end{equation}
where  $\alpha >0$ is a parameter and $g^\tau_{\vec{n}}$ is defined by means of $\hat{M}$ \eqref{def:M} as
\begin{equation*}
g^\tau_{\vec{n}} := \hat{M}(f^\tau_{\vec{n}}).
\end{equation*}
By scaling  $f$ with a positive parameter $\delta$ as
\begin{eqnarray*}
F^\tau_{\vec{n}} &:=& (\alpha \delta)^{-1/\alpha}f^\tau_{\vec{n}}, \\
G^\tau_{\vec{n}} &:=& \hat{M}(F^\tau_{\vec{n}}),
\end{eqnarray*}
we have
\begin{equation}
F^{\tau+1}_{\vec{n}}  = \frac{G^\tau_{\vec{n}}}{\{ 1 - \alpha \delta (G^\tau_{\vec{n}})^{\alpha} \} ^{1/\alpha}}\label{eq:2.2}.
\end{equation}
If there exists a smooth function $F(t, \vec{x}) $  $(t \in \mathbb{R}_{{} \geq 0},\ \vec{x} \in \mathbb{R}^d )$ that satisfies 
$F(\tau \delta, \xi \vec{n})=F^\tau_{\vec{n}} $ with $\xi:=\sqrt{2d\delta} $, we find
\[
F(t+\delta, \vec{x})= G(t,\vec{x}) \{ 1+\delta(G(t,\vec{x}))^{\alpha} \} + O(\delta^2),
\]
with
\[
G(t,\vec{x}):= \frac{1}{2d}\sum_{k=1}^d\left(F(t, \vec{x}+\xi\vec{e}_k)-2F(t, \vec{x})+F(t, \vec{x}-\xi\vec{e}_k)\right),
\]
or
\begin{align*}
\frac{F(t+\delta, \vec{x}) - F(t, \vec{x})}{\delta}&= \sum^d_{k=1}\frac{F(t, \vec{x}+\xi\vec{e}_k)-2F(t, \vec{x})+F(t, \vec{x}-\xi\vec{e}_k)}{\xi ^2}\\
&\qquad \qquad + (F(t, \vec{x}))^{1+\alpha} + O(\delta).
\end{align*}
Taking the limit $\delta \to +0$,  we obtain the semilinear heat equation \eqref{eq:1.1}
\begin{equation*}
\frac{\partial F}{\partial t} = \Delta F + F^{1+\alpha}.
\end{equation*}
Thus \eqref{eq:2.1} can be regarded as a discrete analogue of  \eqref{eq:1.1}.

Because of the term $\displaystyle \left(1-(g^\tau_{\vec{n}})\right)^{1/\alpha}$, if $g^\tau_{\vec{n}} \to 1-0$, then $f^\tau_{\vec{n}} \to +\infty$,
and \eqref{eq:2.1} cannot be defined when  $g^\tau_{\vec{n}} \ge 1$ for generic $\alpha$.
This behaviour may be regarded as an analogue of the blow up of solutions for the semilinear heat equations. 
Thus we define a global solution of \eqref{eq:2.1} as follows.
\begin{definition}
When a solution $f^\tau_{\vec{n}}$ of \eqref{eq:2.1} is non-negative and uniquely determined for all $\tau\in \mathbb{Z}_{{}_\ge0}$ and $\vec{n} \in \mathbb{Z}^d$, i.e. $\ g^\tau_{\vec{n}}<1\ $ for all $\tau\in \mathbb{Z}_{{}_\ge0}$ and $\vec{n} \in \mathbb{Z}^d$, then we say that the solution  $f^\tau_{\vec{n}}$ is a global solution (in time) of \eqref{eq:2.1}.
\end{definition}

The advantage of using $(\ref{eq:2.1})$ instead of \eqref{eq:1.2} is apparant from the following proposition.
\begin{proposition}\label{prop2.4}
No spatially uniform function can be a global solution of $(\ref{eq:2.1})$.
\end{proposition}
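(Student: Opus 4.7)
The plan is to reduce the partial difference equation to a scalar recursion and show that this recursion always exits the admissible range $[0,1)$ in finitely many steps.

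First, I would observe that if $f^\tau_{\vec n} = f^\tau$ is independent of $\vec n$, then by the definition of $\hat M$ we have $g^\tau_{\vec n} = \hat M(f^\tau) = f^\tau$, so \eqref{eq:2.1} reduces to the one-dimensional recursion
\begin{equation*}
f^{\tau+1} = \frac{f^\tau}{(1-(f^\tau)^{\alpha})^{1/\alpha}}, \qquad f^0 = a > 0,
\end{equation*}
where $a>0$ since $a(\vec n)$ is spatially constant and not identically zero. For this recursion to be defined and to give a non-negative value, we need $0 \le f^\tau < 1$. Under Definition of global solution this must hold for all $\tau \ge 0$.

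Next, I would linearize the recursion by substituting $u^\tau := (f^\tau)^{\alpha}$. Raising both sides to the $\alpha$-th power gives
\begin{equation*}
u^{\tau+1} = \frac{u^\tau}{1-u^\tau},
\end{equation*}
so that $1/u^{\tau+1} = 1/u^\tau - 1$. Telescoping yields the closed form
\begin{equation*}
\frac{1}{u^\tau} = \frac{1}{a^{\alpha}} - \tau.
\end{equation*}

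Finally, I would derive the contradiction. Since $a>0$, the integer $\tau^* := \lceil 1/a^{\alpha}\rceil$ is finite, and for $\tau = \tau^* - 1$ we have $1/u^\tau \le 1$, i.e.\ $u^\tau \ge 1$, equivalently $f^\tau \ge 1$. Then $g^\tau_{\vec n} = f^\tau \ge 1$, so $f^{\tau+1}_{\vec n}$ fails to be defined as a non-negative real, contradicting globality. I do not anticipate any serious obstacle: the whole argument is essentially the telescoping trick, and the only thing to be a little careful about is the fact that $a\not\equiv 0$ combined with spatial uniformity forces $a>0$ strictly, which is what makes $1/a^{\alpha}$ finite and hence the exit time finite.
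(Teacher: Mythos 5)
Your proposal is correct and is essentially the paper's own argument: the paper likewise reduces to the scalar recursion, rewrites it as $(f^{\tau+1})^{-\alpha}=(f^\tau)^{-\alpha}-1$, and telescopes to $(f^\tau)^{-\alpha}=(f^0)^{-\alpha}-\tau$, which forces failure at a finite step. Your substitution $u^\tau=(f^\tau)^\alpha$ is just a relabeling of the same quantity, so there is nothing substantive to add.
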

\begin{proof}
When the solution is spatially uniform, then $(\ref{eq:2.1})$ takes the form of an ordinary difference equation
\begin{eqnarray*}
f^{\tau +1} &=& \frac{f^\tau}{\{ 1-(f^\tau)^{\alpha} \} ^{1/\alpha}}\\
 &=& \{ (f^\tau)^{-\alpha} -1 \} ^{-1/\alpha}\quad (\mbox{for $f \ne 0$}).
 \end{eqnarray*}
 Hence, if a non-trivial global solution exists, it satisfies $0 < f^\tau <1$ for all $\tau \in \mathbb{Z}_{{}_\ge 0}$ and 
 \begin{eqnarray*}
(f^{\tau +1})^{-\alpha} &=& (f^\tau)^{-\alpha}-1.
\end{eqnarray*}
However, the above equation is easily solved and we get
\begin{equation*}
(f^\tau)^{-\alpha} = (f^0)^{-\alpha} - \tau
\end{equation*}
Since $(f^0)^{-\alpha}$ is a positive constant,  $f^\tau$ cannot be defined for $\tau \ge (f^0)^{-\alpha}$, which is a contradiction. 
\end{proof}

%
%
Furthermore, \eqref{eq:2.1} inherits quite similar properties to those of \eqref{eq:1.1}. The following theorem is the main result in this article.
\begin{theorem}\label{th2.1}
\begin{remunerate}
\item[$(1)$] For $0 < \alpha < 2/d$, there is no global solution to $(\ref{eq:2.1})$.
\item[$(2)$] For  $\alpha = 2/d$, there is no global solution to $(\ref{eq:2.1})$.
\item[$(3)$] Let $\| f^0\|_1 $ be the $l^1$ norm of the initial function, i.e., $\; \| f^0\|_1 :=\sum_{\vec{n}} |f^0_{\vec{n}}|$.
 For $2/d < \alpha$, if $\; \|f^0\|_1\;$ is sufficiently small, then global solutions to $(\ref{eq:2.1})$ exist.
\end{remunerate}
\end{theorem}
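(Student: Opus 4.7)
The argument splits along the Fujita threshold $\alpha = 2/d$: part (3) is a super-solution construction based on comparison with the free discrete heat flow, while parts (1)--(2) call for a discrete Fujita test-function argument.

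For part (3), I would compare the nonlinear solution with the linear flow $u^\tau := \hat{M}^\tau f^0$, which by a local central limit theorem for the simple random walk satisfies the decay $\|u^\tau\|_\infty \leq C_d\, \tau^{-d/2}\|f^0\|_1$ for $\tau \geq 1$. I would seek an inductive pointwise bound $f^\tau_{\vec{n}} \leq M_\tau u^\tau_{\vec{n}}$ with $M_0 = 1$. Because $(f^{\tau+1})^{-\alpha} = (g^\tau)^{-\alpha} - 1$, the induction step closes provided
\[
M_{\tau+1}^{-\alpha} \leq M_\tau^{-\alpha} - \|u^{\tau+1}\|_\infty^\alpha,
\]
and telescoping yields $M_\tau^{-\alpha} \geq 1 - C_d^\alpha\|f^0\|_1^\alpha \sum_{s\geq 1} s^{-d\alpha/2}$. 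The zeta-like tail converges exactly when $\alpha > 2/d$, so choosing $\|f^0\|_1$ small keeps $M_\tau$ uniformly bounded; then $g^\tau_{\vec{n}} \leq M_\tau \|u^{\tau+1}\|_\infty < 1$ for all $\tau$ and $\vec{n}$, so the solution is global.

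For parts (1)--(2), assume for contradiction that a global solution exists. Since the recursion is monotone in the initial datum, I may further assume $f^0$ is compactly supported, in particular $f^0 \in l^1$. The convexity tangent $(1-x)^{-1/\alpha} \geq 1 + x/\alpha$ translates into the pointwise bound
\[
f^{\tau+1}_{\vec{n}} \geq g^\tau_{\vec{n}} + \tfrac{1}{\alpha}(g^\tau_{\vec{n}})^{1+\alpha}.
\]
Fix a horizon $T$ and take as test weight the backward random-walk kernel $\phi^\tau_{\vec{n}} := P^{T-\tau}(\vec{0},\vec{n})$, which satisfies $\hat{M}(\phi^{\tau+1}) = \phi^\tau$ and $\sum_{\vec{n}} \phi^\tau_{\vec{n}} = 1$. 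Set $J_\tau := \sum_{\vec{n}} f^\tau_{\vec{n}} \phi^\tau_{\vec{n}}$. Self-adjointness of $\hat{M}$ gives $\sum g^\tau \phi^{\tau+1} = \sum f^\tau \phi^\tau = J_\tau$, and Jensen's inequality on the probability measure $\phi^{\tau+1}$ with the convex map $x \mapsto x^{1+\alpha}$ yields the closed recursion
\[
J_{\tau+1} \geq J_\tau + \tfrac{1}{\alpha}J_\tau^{1+\alpha}.
\]
Analyzing $v_\tau := J_\tau^{-\alpha}$ shows it decreases by a uniformly positive amount per step while bounded below, so $v_\tau$ becomes nonpositive in finite time comparable to $v_0$. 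But for a global solution the boundary value $J_{T-1} = g^{T-1}_{\vec{0}}$ lies in $(0,1)$, hence $v_{T-1} > 1$. A local CLT lower bound $J_0 \geq c\, T^{-d/2}\|f^0\|_1$ converts the required $T \gtrsim v_0$ into $T^{1 - d\alpha/2} \gtrsim \|f^0\|_1^{-\alpha}$, which holds for all sufficiently large $T$ exactly when $\alpha < 2/d$, proving part (1).

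The critical case (2) is the main obstacle I anticipate: at $\alpha = 2/d$ the inequality collapses to $1 \gtrsim \|f^0\|_1^{-\alpha}$, which does not hold for small initial data. The natural remedy is a self-improvement bootstrap in the spirit of the Hayakawa--Kobayashi--Sirao--Tanaka--Weissler argument: use the hypothesis of global existence to first derive an a priori $l^\infty$-decay for the putative solution by chaining the comparison with $u^\tau$, reinsert this into a refined lower bound for $\sum (g^\tau)^{1+\alpha}\phi^{\tau+1}$ to extract the missing logarithmic factor, and iterate so that the borderline divergence of $\sum_\tau \tau^{-1}$ finally drives $v_\tau$ below zero. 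Carrying this through in the discrete setting, without the scaling invariance of the continuous heat equation and while handling the parity of the simple random walk, is the delicate step and presumably accounts for the authors' devoting a separate section to this case.
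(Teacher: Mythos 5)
Your part (3) is essentially the paper's own argument: the envelope $M_\tau u^\tau$ with $M_\tau^{-\alpha}=1-\sum_{s\le\tau}\|u^s\|_\infty^\alpha$ is exactly the supersolution $\bar f^\tau_{\vec n}=h^\tau_{\vec n}/\{1-\sum_{k\le\tau}(m_k)^\alpha\}^{1/\alpha}$ of Section \ref{sec4}, and your induction is Proposition \ref{prop5.1}. Your part (1) takes a genuinely different route: the paper compares $f^\tau$ from below with the explicit subsolution $\underline f^\tau_{\vec n}=h^\tau_{\vec n}/\{1-\tau(h^\tau_{\vec n})^\alpha\}^{1/\alpha}$ (Lemma \ref{lem4.1} and Proposition \ref{prop4.1}, resting on the elementary inequality of Lemma \ref{lem4.2}) and reads the contradiction directly off $\tau(h^\tau_{\vec n})^\alpha\sim C^\alpha\tau^{1-d\alpha/2}\to\infty$, whereas you run a Kaplan-type weighted-average argument with the backward kernel and Jensen. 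Both hinge on the same local-CLT decay $U^\tau\sim c_d\,\tau^{-d/2}$; your version is sound provided you verify that the per-step decrement of $v_\tau=J_\tau^{-\alpha}$ is uniformly positive (this needs $J_\tau<1$, which does follow since $J_\tau$ is nondecreasing and $J_{T-1}=g^{T-1}_{\vec 0}<1$) and that you handle the parity of the simple random walk when invoking the CLT lower bound on $J_0$. The subsolution route has the advantage of being reused verbatim in the critical case.

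The genuine gap is part (2): what you give there is a plan, not a proof, and the bootstrap you sketch (derive $l^\infty$ decay, reinsert to extract a logarithm, iterate) is not the mechanism that closes the case. The paper's argument has two concrete ingredients. First, at $\alpha=2/d$ the subsolution already yields a uniform a priori $l^1$ bound on any global solution: since $\tau(h^\tau_{\vec n})^{2/d}\to\{2(d/4\pi)^{d/2}\|f^0\|_1\}^{2/d}$, existence of $\underline f^\tau_{\vec n}$ for all $\tau$ forces $\|f^0\|_1<\frac12(4\pi/d)^{d/2}$, and applying this with $f^\tau$ as a new initial datum gives $\sup_\tau\|f^\tau\|_1\le C_0$ (Proposition \ref{prop6.2}) --- note this is an $l^1$ bound, not the $l^\infty$ decay you propose. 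Second, the Duhamel representation of Proposition \ref{prop6.1}, combined with $H(g)\ge\frac1\alpha g^{1+\alpha}$ and $g^{s-1}_{\vec n}\ge h^{s}_{\vec n}\ge c\,U^{s}_{\vec n-\vec n_0}$ (Lemma \ref{lem6.3}), gives $\|f^\tau\|_1\ge\frac{c^{1+\alpha}}{\alpha}\sum_{s=1}^\tau\sum_{\vec n}(U^s_{\vec n})^{1+2/d}$, and the inner sum is of order $1/s$ precisely at the critical exponent, so the right-hand side diverges logarithmically in $\tau$. The contradiction between these two facts finishes the proof; neither the $l^1$ a priori bound nor the divergent mass lower bound from the Duhamel sum appears in your sketch, so as written part (2) remains unproved.
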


%
{\em Remark.}\
$\bullet$ In the limit $\alpha \to +0$ in $(\ref{eq:2.2})$, we have
\begin{equation*}
\begin{cases}
F^{\tau +1}_{\vec{n}} &= e^\delta G^\tau_{\vec{n}}\\
F^0_{\vec{n}}&=a(\vec{n}) \ge 0 \quad (a(\vec{n}) \not\equiv 0)
\end{cases}
\end{equation*}
Although $\|F^\tau\|_1 $ diverges as $\tau \to + \infty$ as far as $\delta >0$, the solution of this difference equation is a global solution in time.
On the other hand, when $\alpha = 0$, the partial differential equation \eqref{eq:1.1} becomes linear, and its solution is also a global solution.
\\
$\bullet$ To consider negative solutions or oscillatory solutions, we have only to use a slightly modified partial difference equation
\[
\displaystyle{f^{\tau+1}_{\vec{n}}= \frac{g^\tau_{\vec{n}}}{\{ 1 - (\left|g^\tau_{\vec{n}}\right|)^{\alpha} \} ^{1/\alpha}}}.
\]

%
%
\section{Proof of theorem \ref{th2.1} for $\bm{ 0<\alpha<2/d}$}\label{sec3}
The idea of the proof in this and the following sections is similar to that  adopted by Meier in the case of partial differential equations \cite{Meier}. 
We construct a subsolution and a supersolution of $f^\tau_{\vec{n}}$ and prove the existence and non-existence of the global solutions.

We denote by $U^\tau_{\vec{n}}$ the solution to the initial value problem of the linear partial difference equation
\begin{equation}
\begin{cases}
U^{\tau+1}_{\vec{n}} &= \hat{M}(U^\tau_{\vec{n}}) \\
U^0_{\vec{n}} &= \delta_{0,\vec{n}}\label{eq:4.1}
\end{cases}.
\end{equation}
Using $U^\tau_{\vec{n}}$, we define 
\begin{equation}\label{2:hfunction}
h^\tau_{\vec{n}} := \sum\limits_{\vec{n}^\prime}{U^\tau_{\vec{n}-\vec{n}^\prime}f^0_{\vec{n}^\prime}},
\end{equation}
and 
\begin{equation}\label{2:subfunction}
\displaystyle{\underline{f}^\tau_{\vec{n}} :=\frac{h^\tau_{\vec{n}}}{\{ 1- \tau(h^\tau_{\vec{n}})^\alpha \} ^{1/\alpha}}},
\end{equation}
provided that $1-\tau(h_{\vec{n}}^\tau)^\alpha>0$.

%
%
{\em Remark.} \
$\bullet$ Since the support of $U^\tau$ is finite,  the summation in the definition of $h^\tau_{\vec{n}}$ is over a finite number of lattice points $\vec{n}$.\\
$\bullet$ Due to the definition of $U_{\vec{n}}^\tau$, it holds that $\displaystyle \left\|  h^\tau \right\|_1 = \left\| f^0 \right\|_1 $.\\
$\bullet$ The function  $h^\tau_{\vec{n}}$ satisfies the linear partial difference equation
\begin{eqnarray}\label{2:eq.h}
\begin{cases}
h^{\tau+1}_{\vec{n}} = \hat{M}(h^\tau_{\vec{n}}) \\
h^0_{\vec{n}} = f^0_{\vec{n}}
\end{cases}.
\end{eqnarray}
\noindent
$\bullet$ Since $h^\tau_{\vec{n}} \ge 0$, $\underline{f}^\tau_{\vec{n}} \ge 0$ if it exists.\\
$\bullet$ When $h^\tau_{\vec{n}} > 0$ and $1-\tau (h_{\vec{n}}^{\tau})^\alpha>0$,
\begin{equation}\label{3:hf}
\left( \underline{f}^\tau_{\vec{n}} \right)^{-\alpha}=\left( h^\tau_{\vec{n}} \right)^{-\alpha}-\tau.
\end{equation}
\noindent
$\bullet$ The function  $\underline{f}^\tau_{\vec{n}}$ does not always exist for all $\tau \in \mathbb{Z}_{{}_\ge 0}$.
When $1-\tau(h_{\vec{n}}^\tau)^\alpha \le 0$, $\, \underline{f}^\tau_{\vec{n}} $ does not exist.

To  understand the meaning of $\underline{f}^\tau_{\vec{n}}$ and in order to give the proof of theorem \ref{th2.1} (1), we need several lemmas and propositions.
First we consider the following elementary inequality.
%
%
\begin{lemma}\label{lem4.2}
Let $x_k$ ($k=1,2,...,N)$ be non-negative real numbers, $\alpha>0$,  and $\displaystyle \sum_{k=1}^N x_k > 0$.
It then holds that 
\begin{equation*}
\displaystyle{\frac{N^\alpha\sum\limits^N_{k=1}{(x_k)^{1+\alpha}}}{\left( \sum\limits^N_{k=1}{x_k} \right)^{1+\alpha}}\geq 1}.
\end{equation*}
\end{lemma}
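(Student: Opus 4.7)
The inequality is equivalent, after dividing numerator and denominator by $N^{1+\alpha}$, to
\[
\frac{1}{N}\sum_{k=1}^{N}x_k^{1+\alpha}\;\ge\;\left(\frac{1}{N}\sum_{k=1}^{N}x_k\right)^{1+\alpha},
\]
i.e.\ the statement that the arithmetic mean of $x_k^{1+\alpha}$ dominates the $(1+\alpha)$-th power of the arithmetic mean. Since $\alpha>0$, the function $\varphi(t)=t^{1+\alpha}$ is convex on $[0,\infty)$, so this is a direct application of Jensen's inequality to $\varphi$ with the uniform probability weights $1/N$. The assumption $\sum_k x_k>0$ is not actually needed for the inequality itself (both sides are $0$ when all $x_k=0$); it is mentioned only so that the original fraction is well defined.

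My plan is to present this in the equivalent Hölder form, which avoids invoking Jensen and is self-contained in one line. Writing
\[
\sum_{k=1}^{N}x_k=\sum_{k=1}^{N}1\cdot x_k,
\]
Hölder's inequality with conjugate exponents $p=1+\alpha$ and $q=(1+\alpha)/\alpha$ gives
\[
\sum_{k=1}^{N}x_k\;\le\;\left(\sum_{k=1}^{N}1\right)^{\alpha/(1+\alpha)}\!\left(\sum_{k=1}^{N}x_k^{1+\alpha}\right)^{1/(1+\alpha)}
=N^{\alpha/(1+\alpha)}\!\left(\sum_{k=1}^{N}x_k^{1+\alpha}\right)^{1/(1+\alpha)}.
\]
Raising both sides to the $(1+\alpha)$-th power and rearranging produces exactly the claimed inequality.

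There is no real obstacle here; the only thing worth mentioning is that the lemma is merely a restatement of convexity of $t\mapsto t^{1+\alpha}$, so whichever packaging (Jensen, Hölder, or power–mean inequality) the authors prefer will do. I would pick the Hölder version for the proof since it is a single display and requires no invocation of probability-theoretic language.
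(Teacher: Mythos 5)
Your proof is correct, and it takes a genuinely different route from the paper's. You reduce the claim to H\"older's inequality with conjugate exponents $p=1+\alpha$, $q=(1+\alpha)/\alpha$ (equivalently, Jensen's inequality for the convex function $t\mapsto t^{1+\alpha}$, or the power--mean inequality), which disposes of the lemma in one display. The paper instead gives a self-contained elementary argument: it sets
$\phi(x_1,\dots,x_N) := N^\alpha\sum_k x_k^{1+\alpha} - \bigl(\sum_k x_k\bigr)^{1+\alpha}$,
orders the variables as $x_1\ge\cdots\ge x_N\ge 0$, and shows by a sign computation on partial derivatives that successively equalizing the smallest variables can only decrease $\phi$, so that $\phi(x_1,\dots,x_N)\ge\phi(x_1,\dots,x_1)=0$. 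Your version buys brevity and transparency at the cost of citing a classical inequality; the paper's version buys self-containedness (no external inequality is invoked) at the cost of a longer smoothing argument. Your side remark is also right: the hypothesis $\sum_k x_k>0$ serves only to make the displayed fraction well defined, since the underlying inequality $N^\alpha\sum_k x_k^{1+\alpha}\ge\bigl(\sum_k x_k\bigr)^{1+\alpha}$ holds trivially when all $x_k$ vanish. Either packaging is acceptable; if you present the H\"older form, state the conjugate-exponent check $1/p+1/q=1$ explicitly so the one-line display is fully justified.
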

%
%
\begin{proof}
If we put
\begin{equation*}
\phi(x_1,\cdots,x_N) := N^\alpha\sum\limits^N_{k=1}{(x_k)^{1+\alpha}} - \left( \sum\limits^N_{k=1}{x_k}\right)^{1+\alpha},
\end{equation*}
lemma \ref{lem4.2} is equivalent to the inequality $\phi(x_1,\cdots,x_N) \ge 0$.
Without loss of generality, we can assume
$
\; x_1 \ge x_2 \ge \cdots \ge x_N \ge 0.
$
From  the partial differentiation
\begin{align*}
&\frac{\partial}{\partial x_{k+1}} \phi(x_1,\cdots,x_{k}, \underbrace{x_{k+1},x_{k+1},\cdots,x_{k+1}}_{N-k}) \qquad  (k=1,2,\ldots,N-1) \\
&=(N-k)(1+\alpha ) \left\{ (Nx_{k+1})^\alpha - \left( \sum_{j=1}^{k}{x_j} + (N-k)x_{k+1} \right)^\alpha \right\} \le 0,
\end{align*}
we find
\[
\phi(x_1,\cdots,x_{k},\underbrace{x_{k+1}, x_{k+1},\cdots,x_{k+1}}_{N-k}) \ge \phi(x_1,\cdots,x_{k-1},\underbrace{x_k,x_k,\cdots,x_k}_{N-k+1}).
\]
Hence we have, recursively,
\begin{eqnarray*}
\phi(x_1,\cdots,x_{N-1},x_{N}) &\geq& \phi(x_1,\cdots,x_{N-2},x_{N-1},x_{N-1}) \\
&\geq& \phi(x_1,\cdots,x_{N-3}, x_{N-2},x_{N-2},x_{N-2}) \\
&\geq& \cdots \geq \phi(x_1, \cdots ,x_1) = 0. 
\end{eqnarray*}
\end{proof}

%
%
\begin{lemma}\label{lem4.1}
Suppose that $\underline{f}^\tau_{\vec{n}}$  and $\underline{f}^\tau_{\vec{n}\pm\vec{e_k}}$ $(k=1,2,\ldots,d)$ exist and that $\hat{M}( \underline{f}^\tau_{\vec{n}})>0$.
If $\{\hat{M}(\underline{f}^\tau_{\vec{n}})\}^{-\alpha} -1 >0$, then
$\underline{f}^{\tau+1}_{\vec{n}}$ exists and satisfies
\begin{equation}
(\underline{f}^{\tau+1}_{\vec{n}})^{-\alpha} \geq \{\hat{M}(\underline{f}^\tau_{\vec{n}})\}^{-\alpha} -1\label{eq:4.2}.
\end{equation}
\end{lemma}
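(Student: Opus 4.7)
My plan is to reduce \eqref{eq:4.2} to a single Jensen inequality for an auxiliary function $\varphi_\tau$, letting the linear iterate $h^\tau_{\vec{n}}$ do all the heavy lifting.

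First I would combine \eqref{3:hf}, applied both at $\vec{n}$ and at the $2d$ neighbours $\vec{n}\pm\vec{e}_k$, with the linear evolution \eqref{2:eq.h}. Setting
\[
\varphi_\tau(x) := \bigl(x^{-\alpha}-\tau\bigr)^{-1/\alpha} \;=\; \frac{x}{(1-\tau x^\alpha)^{1/\alpha}}, \qquad 0<x<\tau^{-1/\alpha}
\]
(with $\varphi_0(x)=x$), one has $\underline{f}^\tau_{\vec{n}\pm\vec{e}_k}=\varphi_\tau(h^\tau_{\vec{n}\pm\vec{e}_k})$, and if $\underline{f}^{\tau+1}_{\vec{n}}$ is to be defined through \eqref{2:subfunction} then $(\underline{f}^{\tau+1}_{\vec{n}})^{-\alpha}=\hat{M}(h^\tau_{\vec{n}})^{-\alpha}-(\tau+1)$. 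The target inequality \eqref{eq:4.2} is therefore equivalent to
\[
\hat{M}\bigl(\varphi_\tau(h^\tau_{\vec{n}})\bigr) \;\ge\; \varphi_\tau\bigl(\hat{M}(h^\tau_{\vec{n}})\bigr).
\]

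The heart of the proof is the convexity of $\varphi_\tau$ on its domain. I would obtain it by implicit differentiation of $\varphi_\tau^{-\alpha}=x^{-\alpha}-\tau$, which produces $\varphi_\tau'(x)=(\varphi_\tau(x)/x)^{\alpha+1}$; since $\tau\ge 0$ forces $\varphi_\tau(x)\ge x$, a further differentiation gives $\varphi_\tau''(x)\ge 0$. The standing hypothesis that each $\underline{f}^\tau_{\vec{n}\pm\vec{e}_k}$ exists guarantees $h^\tau_{\vec{n}\pm\vec{e}_k}<\tau^{-1/\alpha}$, and therefore the uniform average $\hat{M}(h^\tau_{\vec{n}})$ lies in the domain of $\varphi_\tau$ as well. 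Jensen's inequality applied to the $2d$-point uniform mean defining $\hat{M}$ then delivers the displayed inequality.

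Finally, I would verify that $\underline{f}^{\tau+1}_{\vec{n}}$ is actually defined, i.e.\ that $1-(\tau+1)(h^{\tau+1}_{\vec{n}})^\alpha>0$. Chaining the inequality just obtained with the hypothesis $\hat{M}(\underline{f}^\tau_{\vec{n}})^{-\alpha}>1$ yields
\[
\hat{M}(h^\tau_{\vec{n}})^{-\alpha}-\tau \;\ge\; \hat{M}(\underline{f}^\tau_{\vec{n}})^{-\alpha} \;>\; 1,
\]
so $(h^{\tau+1}_{\vec{n}})^{-\alpha}=\hat{M}(h^\tau_{\vec{n}})^{-\alpha}>\tau+1$, exactly as required. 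The only non-routine step is the convexity of $\varphi_\tau$; everything else is bookkeeping with the elementary Jensen inequality for the uniform average defining $\hat{M}$. Lemma~\ref{lem4.2} does not appear to be needed for this particular step and is presumably used later when one iterates this inequality.
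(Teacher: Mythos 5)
Your argument is correct, but it is organized genuinely differently from the paper's. You recast the target inequality as Jensen's inequality $\hat{M}(\varphi_\tau(h^\tau_{\vec{n}}))\ge\varphi_\tau(\hat{M}(h^\tau_{\vec{n}}))$ for the single convex function $\varphi_\tau(x)=(x^{-\alpha}-\tau)^{-1/\alpha}$, and your verification of convexity via $\varphi_\tau'=(\varphi_\tau/x)^{\alpha+1}$ together with $\varphi_\tau(x)\ge x$ is sound, as is the final existence step $(h^{\tau+1}_{\vec{n}})^{-\alpha}>\tau+1$. The paper instead fixes the neighbour values $h^\tau_{\vec{n}\pm\vec{e}_k}$, regards $\tau$ as a continuous deformation parameter in $\chi(\tau)=\{\hat{M}(h^\tau_{\vec{n}})\}^{-\alpha}-\tau-\{\hat{M}(\underline{f}^\tau_{\vec{n}})\}^{-\alpha}$, proves $d\chi/d\tau\ge0$ by means of Lemma~\ref{lem4.2} (which is exactly Jensen's inequality for $t\mapsto t^{1+\alpha}$ restricted to the nonzero entries), and integrates from $\chi(0)=0$. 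The two proofs are essentially dual: the paper's derivative computation produces precisely the power $(\cdot)^{\alpha+1}$ appearing in your formula for $\varphi_\tau'$, so the paper integrates the power-function Jensen inequality over the deformation parameter while you apply Jensen once to the already-deformed function. Your route is shorter and renders Lemma~\ref{lem4.2} unnecessary (it is not used elsewhere in the paper); the paper's route confines the only nontrivial inequality to the completely elementary, self-contained Lemma~\ref{lem4.2}. One detail you should state explicitly: some of the $h^\tau_{\vec{n}\pm\vec{e}_k}$ may vanish (the hypothesis $\hat{M}(\underline{f}^\tau_{\vec{n}})>0$ only guarantees that at least one is positive), so you need $\varphi_\tau$ extended by $\varphi_\tau(0)=0$ and convexity on the half-open interval $[0,\tau^{-1/\alpha})$; both are immediate from the equivalent expression $\varphi_\tau(x)=x(1-\tau x^\alpha)^{-1/\alpha}$, which is exactly the form used in \eqref{2:subfunction}.
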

%
%
{\em Remark}\  $\bullet$ The condition $ \hat{M}(\underline{f}^\tau_{\vec{n}})>0$ implies that at least one $\underline{f}^\tau_{\vec{n}\pm \vec{e}_k}$ is positive and  that $h^{\tau+1}_{\vec{n}}=\hat{M}(h^\tau_{\vec{n}})>0$.\\
$\bullet$ When $\hat{M}(h^\tau_{\vec{n}})=0$, then $h^{\tau+1}_{\vec{n}}=0$ and $ \underline{f}^{\tau+1}_{\vec{n}}=0$.

\vspace*{2mm}
%
%
\begin{proof}
We assume that there are $m$ ($1\le m \le 2d$) nonzero values among $\displaystyle \{ h^\tau_{\vec{n}\pm\vec{e}_k}\}_{k=1}^d$, and denote them
by $h_1, h_2, \ldots, h_m$. Accordingly we put 
\[
f_i:=\frac{h_i}{\left(1-\tau (h_i)^\alpha\right)^{1/\alpha}}=(h_i^{-\alpha}-\tau)^{-1/\alpha} \qquad (i=1,2,\ldots,m).
\]
By definition,
\begin{align*}
\chi(\tau)&:=\{\hat{M}(h^\tau_{\vec{n}})\}^{-\alpha} - \tau - \{ \hat{M}(\underline{f}^\tau_{\vec{n}})\}^{-\alpha} \\
&= \left\{\displaystyle{\frac{1}{2d}\sum\limits^d_{k=1}{(h^\tau_{\vec{n}+\vec{e}_k}+h^\tau_{\vec{n}-\vec{e}_k})}}\right\}^{-\alpha} - \tau 
- \left\{\displaystyle{\frac{1}{2d}\sum\limits^d_{k=1}(\underline{f}^\tau_{\vec{n}+\vec{e}_k}+\underline{f}^\tau_{\vec{n}-\vec{e}_k})}\right\}^{-\alpha}\\
&=\left(\frac{1}{2d}\sum_{i=1}^m h_i\right)^{-\alpha} - \tau -\left(\frac{1}{2d}\sum_{i=1}^m f_i\right)^{-\alpha}\\
&=\left(\frac{1}{2d}\sum_{i=1}^m h_i\right)^{-\alpha} - \tau -\left(\frac{1}{2d}\sum_{i=1}^m \{(h_i)^{-\alpha}-\tau\}^{-1/\alpha}\right)^{-\alpha}. 
\end{align*}
Regarding $\tau$ as a continuous variable, 
\begin{align*}
\frac{d \chi(\tau)}{d \tau}&=-1+(2d)^{\alpha}\frac{\sum_{i=1}^m \{(h_i)^{-\alpha}-\tau\}^{-(\alpha+1)/\alpha}}{\left[  \sum_{i=1}^m\{ (h_i)^{-\alpha}-\tau  \}^{-1/\alpha} \right]^{\alpha+1}}\\
&\ge-1+(m)^{\alpha}\frac{\sum_{i=1}^m \{(h_i)^{-\alpha}-\tau\}^{-(\alpha+1)/\alpha}}{\left[  \sum_{i=1}^m\{ (h_i)^{-\alpha}-\tau  \}^{-1/\alpha} \right]^{\alpha+1}}.
\end{align*}
Appling lemma \ref{lem4.2} for
$
N=m, \ x_i= \{ (h_i)^{-\alpha}-\tau  \}^{-1/\alpha},
$
we find  $\displaystyle \frac{d \chi(\tau)}{d \tau} \ge 0$.
Thus $\chi(\tau) \ge \chi(0)=0$, and we obtain
\begin{equation*}
\left\{\hat{M}(h^\tau_{\vec{n}})\right\}^{-\alpha}-\tau\geq\left\{\hat{M}(\underline{f}^\tau_{\vec{n}})\right\}^{-\alpha}.
\end{equation*}
By the assumption $\{\hat{M}(\underline{f}^\tau_{\vec{n}})\}^{-\alpha} -1 >0$ and \eqref{2:subfunction}  we have
\[
(h_{\vec{n}}^{\tau+1})^{-\alpha}-(\tau+1)=\{\hat{M}(h^\tau_{\vec{n}})\}^{-\alpha}-\tau-1\geq\{\hat{M}(\underline{f}^\tau_{\vec{n}})\}^{-\alpha}-1>0.
\]
Thus we find  
\[
1-(\tau+1)(h_{\vec{n}}^{\tau+1})^{\alpha}>0.
\]
Therefore $\underline{f}^{\tau+1}_{\vec{n}}$ exists and due to \eqref{3:hf} it satisfies
\[
(\underline{f}_{\vec{n}}^{\tau+1})^{-\alpha}\geq\{\hat{M}(\underline{f}^\tau_{\vec{n}})\}^{-\alpha}-1.
\]
\end{proof}

%
%
%
\begin{proposition}\label{prop4.1}
If the solution $f^\tau_{\vec{n}}$ of the initial value problem \eqref{eq:2.1} exists at $\tau$ and for all $\vec{n} \in \mathbb{Z}^d$, 
then $\underline{f}^\tau_{\vec{n}}$ exists at the same $\tau$ and for all $\vec{n} \in \mathbb{Z}^d$, and satisfies
\begin{equation}\label{2:eqprop4.1}
\underline{f}^\tau_{\vec{n}} \leq f^\tau_{\vec{n}}.
\end{equation}
\end{proposition}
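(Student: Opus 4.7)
The natural plan is induction on $\tau$. For the base case $\tau = 0$, the definition gives $\underline{f}^0_{\vec{n}} = h^0_{\vec{n}} = f^0_{\vec{n}}$, so existence and the inequality are trivial.

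For the inductive step, assume the claim holds up to time $\tau$; since $f^{\tau+1}_{\vec{n}}$ is assumed to exist for all $\vec{n}$, so does $f^{\tau}_{\vec{n}}$ (and hence $\underline{f}^\tau_{\vec{n}}$ by the inductive hypothesis). The defining relation of \eqref{eq:2.1} then requires $g^\tau_{\vec{n}} = \hat{M}(f^\tau_{\vec{n}}) < 1$, and rearranging yields the key identity
\begin{equation*}
(f^{\tau+1}_{\vec{n}})^{-\alpha} = \{\hat{M}(f^\tau_{\vec{n}})\}^{-\alpha} - 1.
\end{equation*}
By the inductive hypothesis and the monotonicity of $\hat{M}$ on non-negative sequences, $\hat{M}(\underline{f}^\tau_{\vec{n}}) \le \hat{M}(f^\tau_{\vec{n}}) < 1$, so $\{\hat{M}(\underline{f}^\tau_{\vec{n}})\}^{-\alpha} - 1 > 0$ whenever $\hat{M}(\underline{f}^\tau_{\vec{n}}) > 0$.

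In the case $\hat{M}(\underline{f}^\tau_{\vec{n}}) > 0$, I can apply Lemma \ref{lem4.1} to conclude that $\underline{f}^{\tau+1}_{\vec{n}}$ exists and
\begin{equation*}
(\underline{f}^{\tau+1}_{\vec{n}})^{-\alpha} \geq \{\hat{M}(\underline{f}^\tau_{\vec{n}})\}^{-\alpha} - 1 \geq \{\hat{M}(f^\tau_{\vec{n}})\}^{-\alpha} - 1 = (f^{\tau+1}_{\vec{n}})^{-\alpha},
\end{equation*}
which, since both quantities are non-negative, gives $\underline{f}^{\tau+1}_{\vec{n}} \le f^{\tau+1}_{\vec{n}}$. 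In the degenerate case $\hat{M}(\underline{f}^\tau_{\vec{n}}) = 0$, all neighbouring $\underline{f}^\tau$ values vanish, hence so do the corresponding $h^\tau$ values, forcing $h^{\tau+1}_{\vec{n}} = 0$ and thus $\underline{f}^{\tau+1}_{\vec{n}} = 0 \le f^{\tau+1}_{\vec{n}}$ by the non-negativity noted after \eqref{2:subfunction}.

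The main obstacle, which is really where the technical content lies, is in verifying that we may legitimately chain the two inequalities at the heart of the inductive step: one must simultaneously establish existence of $\underline{f}^{\tau+1}_{\vec{n}}$ (i.e., the positivity condition $1 - (\tau+1)(h^{\tau+1}_{\vec{n}})^\alpha > 0$) and the comparison with $f^{\tau+1}_{\vec{n}}$. This is exactly the content of Lemma \ref{lem4.1}, so the proof reduces to checking the hypotheses of that lemma, namely $\hat{M}(\underline{f}^\tau_{\vec{n}}) > 0$ and $\{\hat{M}(\underline{f}^\tau_{\vec{n}})\}^{-\alpha} - 1 > 0$, both of which follow from the assumed existence of $f^{\tau+1}_{\vec{n}}$ together with monotonicity. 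No further machinery is needed.
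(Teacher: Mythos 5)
Your proof is correct and follows essentially the same route as the paper's: induction on $\tau$, with Lemma \ref{lem4.1} supplying both the existence of $\underline{f}^{\tau+1}_{\vec{n}}$ and the inequality $(\underline{f}^{\tau+1}_{\vec{n}})^{-\alpha}\ge\{\hat{M}(\underline{f}^\tau_{\vec{n}})\}^{-\alpha}-1$ in the main case, and a separate treatment of the degenerate case where the relevant mean vanishes. The only cosmetic difference is that you split on $\hat{M}(\underline{f}^\tau_{\vec{n}})=0$ versus $>0$ while the paper splits on $f^{\tau+1}_{\vec{n}}=0$ versus $>0$; the two case analyses cover the same situations and the substance is identical.
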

%
%
\begin{proof} \quad 
The proof goes by induction for $\tau$.
Since $\ \displaystyle \underline{f}^0_{\vec{n}} = f^0_{\vec{n}}$ by definition,  $\underline{f}^\tau_{\vec{n}}$ exists and satisfies \eqref{2:eqprop4.1} for
$\tau=0$ and for all $\vec{n}$.
Suppose that $\underline{f}^\tau_{\vec{n}}$ exists and satisfies  \eqref{2:eqprop4.1} for all $\vec{n}$ and for all $\tau \leq s$.
If $f^{s+1}_{\vec{n}}$ exists, either $f^{s+1}_{\vec{n}}=0$ or $f^{s+1}_{\vec{n}}>0$.
When $f^{s+1}_{\vec{n}}=0$, it implies 
\begin{eqnarray*}
f^{s+1}_{\vec{n}}=0 & \Longleftrightarrow &\hat{M}(f^{s}_{\vec{n}})=0 \\
& \Longleftrightarrow &f^{s}_{\vec{n}\pm \vec{e}_k}=0 \quad (k=1,2,\ldots,d)\\
& \Longrightarrow &\underline{f}^{s}_{\vec{n}\pm \vec{e}_k}=0 \quad (k=1,2,\ldots,d)\\
& \Longleftrightarrow &h^{s}_{\vec{n}\pm \vec{e}_k}=0 \quad (k=1,2,\ldots,d)\\
& \Longleftrightarrow &\hat{M}(h^{s}_{\vec{n}})=0 \\
& \Longleftrightarrow &h^{s+1}_{\vec{n}}=0 \\
& \Longleftrightarrow &\underline{f}^{s+1}_{\vec{n}}=0.
\end{eqnarray*}
Hence $\underline{f}^{s+1}_{\vec{n}}$ exists and satisfies $\underline{f}^{s+1}_{\vec{n}} \le f^{s+1}_{\vec{n}}$.

When $f^{s+1}_{\vec{n}}>0$, if $ \hat{M}(\underline{f}^{s}_{\vec{n}} )=0$, then $ \hat{M}(h^{s}_{\vec{n}} )=0$ and $\underline{f}^{s+1}_{\vec{n}}=0$.
Otherwise
\begin{equation*}
\{\hat{M}(\underline{f}^{s}_{\vec{n}})\}^{-\alpha}-1 \geq \{\hat{M}(f^s_{\vec{n}})\}^{-\alpha}-1 = (f^{s+1}_{\vec{n}})^{-\alpha} > 0.
\end{equation*}
Then, from lemma \ref{lem4.1}, the existence of $\underline{f}^{s+1}_{\vec{n}}$ follows.
Moreover, from \eqref{eq:4.2},
\begin{eqnarray*}
(\underline{f}^{s+1}_{\vec{n}})^{-\alpha} &\geq& \{\hat{M}(\underline{f}^s_{\vec{n}})\}^{-\alpha} -1 \\
&\geq& \{\hat{M}(f^s_{\vec{n}})\}^{-\alpha} -1 = (f^{s+1}_{\vec{n}})^{-\alpha}.
\end{eqnarray*}
Thus we find
\begin{equation*}
\underline{f}^{s+1}_{\vec{n}} \leq f^{s+1}_{\vec{n}}.
\end{equation*}
From the induction hypothesis, \eqref{2:eqprop4.1} holds for arbitrary $\tau \in \mathbb{Z}_+$. 
\end{proof}

%
%
The following asymptotic evaluation of $U^\tau_{\vec{n}}$ is well known from the analysis of the transition probability of $d$-dimensional simple random walk.
See for example F.~Spitzer$\cite{F.Spitzer}$.
\begin{proposition}\label{prop3.2}
\begin{equation}\label{3:asymU}
\displaystyle{U^\tau_{\vec{n}} \sim 2\left( \frac{d}{4\pi}\right)^{d/2}\tau^{-d/2}} \ (\tau\to+\infty).
\end{equation}
Here $A^\tau\sim B^\tau \ (\tau\to+\infty)$ means $\lim_{\tau\to+\infty}{(A^\tau/B^\tau)}=1$.
\end{proposition}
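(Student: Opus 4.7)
The statement is a local central limit theorem for the simple symmetric random walk on $\mathbb{Z}^d$: the recurrence $U^{\tau+1}_{\vec{n}}=\hat M(U^\tau_{\vec{n}})$ with $U^0_{\vec{n}}=\delta_{0,\vec{n}}$ identifies $U^\tau_{\vec{n}}$ with the probability that the nearest-neighbor simple random walk started at the origin sits at $\vec{n}$ after $\tau$ steps. My plan is therefore to carry out the standard Fourier/Laplace asymptotic analysis of this transition probability; the cited reference \cite{F.Spitzer} provides a self-contained treatment.

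Concretely, I would first take the discrete Fourier transform of \eqref{eq:4.1} to obtain
\[
U^\tau_{\vec{n}} = \frac{1}{(2\pi)^d}\int_{[-\pi,\pi]^d} \phi(\vec{\theta})^\tau\, e^{-i\vec{\theta}\cdot\vec{n}}\, d\vec{\theta}, \qquad \phi(\vec{\theta}):=\frac{1}{d}\sum_{k=1}^d \cos\theta_k,
\]
since $\hat M$ is the convolution operator with Fourier symbol $\phi$. Because $|\phi(\vec{\theta})|=1$ holds only at $\vec{\theta}=\vec{0}$ and $\vec{\theta}=\vec{\pi}:=(\pi,\ldots,\pi)$, and $|\phi|\le 1-c'$ on the complement of any fixed neighborhood of these two points, the contribution of that complement is $O((1-c')^\tau)$ and hence negligible. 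Near each critical point one Taylor-expands $\phi$ quadratically, $\phi(\vec{\theta})=1-|\vec{\theta}|^2/(2d)+O(|\vec{\theta}|^4)$ around $\vec{0}$ and analogously around $\vec{\pi}$, rescales $\vec{\theta}\mapsto\vec{\theta}/\sqrt{\tau}$, and evaluates the resulting Gaussian integral; the contribution from $\vec{\pi}$ carries an additional phase $(-1)^{\tau+\sum_k n_k}$. Summing the two contributions produces the claimed $\tau^{-d/2}$-decay with a prefactor of $2$ whenever $\tau+\sum_k n_k$ is even (which in particular covers $\vec{n}=\vec{0}$ with $\tau$ even), yielding \eqref{3:asymU}.

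The main technical obstacle is the cleanup: one needs a uniform bound of the form $|\phi(\vec{\theta})|\le\exp(-c|\vec{\theta}|^2)$ on a ball around $\vec{0}$ (and the symmetric bound near $\vec{\pi}$), together with control of the quartic Taylor remainder of $\log\phi$ on the rescaled window $|\vec{\theta}|\le\tau^{-1/2+\epsilon}$, so that the Gaussian approximation is uniform enough to pass to the limit. These estimates are routine but fiddly; as the authors indicate, the cleanest route is simply to invoke the local CLT for the simple random walk on $\mathbb{Z}^d$ from \cite{F.Spitzer} and read off the asymptotic \eqref{3:asymU} directly.
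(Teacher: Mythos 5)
Your proposal matches the paper's treatment: the paper offers no proof of Proposition \ref{prop3.2} at all, merely pointing to Spitzer for the local central limit theorem of the simple random walk, and your Fourier-inversion sketch (isolate the two critical points $\vec{0}$ and $\vec{\pi}$ of the symbol $\phi$, Gaussian approximation after rescaling, exponential suppression elsewhere) is exactly the standard argument behind that citation. Two remarks, both of which your write-up surfaces more carefully than the paper does. First, the phase $(-1)^{\tau+\sum_k n_k}$ you identify means \eqref{3:asymU} can only hold along the subsequence with $\tau+\sum_k n_k$ even; for the opposite parity $U^\tau_{\vec{n}}=0$, so the proposition as literally stated needs that restriction (harmless for the later applications, which can be read along the admissible subsequence). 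Second, if you carry your Gaussian integral to the end, each critical point contributes $\frac{1}{(2\pi)^d}\bigl(\frac{2\pi d}{\tau}\bigr)^{d/2}=\bigl(\frac{d}{2\pi\tau}\bigr)^{d/2}$, so the prefactor is $2\left(\frac{d}{2\pi}\right)^{d/2}$ rather than the paper's $2\left(\frac{d}{4\pi}\right)^{d/2}$; the case $d=1$, $\vec{n}=0$, where $U^{2m}_{0}=\binom{2m}{m}4^{-m}\sim(\pi m)^{-1/2}=\sqrt{2/(\pi\tau)}$, confirms this and shows the stated constant is off by $2^{d/2}$, so your closing claim that the computation "yields \eqref{3:asymU}" should be amended accordingly. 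Since every later use of Proposition \ref{prop3.2} needs only the rate $\tau^{-d/2}$ and the positivity of the constant, neither point affects the rest of the paper, but your proof should state the corrected constant and the parity restriction explicitly.
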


%
%
Now, we are ready to give the proof of theorem \ref{th2.1}  (1). 

{\em Proof of theorem \ref{th2.1}  (1).} \ 

Suppose that the global solution $f^\tau_{\vec{n}}$ exists. 
From proposition \ref{prop3.2}, $h^\tau_{\vec{n}}$ is evaluated as
\begin{eqnarray}
h^\tau_{\vec{n}} &=& \displaystyle{\sum\limits_{\vec{n}^\prime}{U^\tau_{\vec{n}-\vec{n}^\prime}f^0_{\vec{n}}}} \nonumber\\
&\sim& \displaystyle{\frac{1}{\sqrt{\tau^d}}\sum\limits_{\vec{n}}{2\left(\frac{d}{4\pi}\right)^{d/2}f^0_{\vec{n}}} \quad (\tau\to+\infty)}\label{eq:4.6}.
\end{eqnarray}
Putting \;
$\displaystyle C:=2\left(d/4\pi\right)^{d/2}\| f^0 \|_1$, 
\begin{equation}\label{3:eqProof}
\displaystyle{\underline{f}^\tau_{\vec{n}} \sim \frac{h^\tau_{\vec{n}}}{(1-C^\alpha\tau^{1-d\alpha/2})^{1/\alpha}}}.
\end{equation}
For $0<\alpha<2/d$, the exponent of $\tau$ in the above equation satisfies $1-d\alpha/2>0$.
However, since $C$ is a positive constant independent of $\tau$, 
$1-C^\alpha\tau^{1-d\alpha/2}<0$ for sufficiently large $\tau$ and the evaluation \eqref{3:eqProof} does not make sense.
In other words,  $\underline{f}^\tau_{\vec{n}}$ cannot exist for sufficiently large $\tau$, which contradicts
proposition \ref{prop4.1} and the statement (1) of theorem \ref{th2.1}  is therefore proved.
$\qquad\endproof$

%
%
\section{Proof of the theorem \ref{th2.1} for $\bm{ 2/d<\alpha}$}\label{sec4}
\quad First, we define a supersolution of \eqref{eq:2.1}
\begin{equation}\label{4:supersolution}
\displaystyle{\bar{f}^\tau_{\vec{n}} := \frac{h^\tau_{\vec{n}}}{\left\{1-\sum\limits^\tau_{k=0}{(m_k)^\alpha}\right\}^{1/\alpha}}},
\end{equation}
where $m_\tau$ is defined in terms of \eqref{2:hfunction} as
\begin{equation}\label{4:mtau}
m_\tau := \sup_{\vec{n}}{h^\tau_{\vec{n}}}.
\end{equation}
Of course $\bar{f}^\tau_{\vec{n}}$ is well defined only when $\displaystyle 1-\sum\limits^\tau_{k=0}(m_k)^\alpha>0 $.

%
%
\begin{proposition}\label{prop5.1}
When $\bar{f}^\tau_{\vec{n}}$ exists at $\tau$ and for all $\vec{n} \in \mathbb{Z}^d$, 
$f^\tau_{\vec{n}}$ exists and satisfies
\begin{equation}\label{5:prepineq}
\bar{f}^\tau_{\vec{n}} \geq f^\tau_{\vec{n}}.
\end{equation}
\end{proposition}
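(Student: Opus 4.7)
The plan is to prove Proposition \ref{prop5.1} by induction on $\tau$, in direct parallel with Proposition \ref{prop4.1} but with all inequalities reversed. The base case $\tau=0$ is immediate: the existence of $\bar{f}^0_{\vec{n}}$ forces $m_0^\alpha<1$, so $\bar{f}^0_{\vec{n}}=f^0_{\vec{n}}/(1-m_0^\alpha)^{1/\alpha}\geq f^0_{\vec{n}}$, and $f^0$ is given by data.

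For the inductive step, assume $\bar{f}^\tau_{\vec{n}}$ exists with $\bar{f}^\tau_{\vec{n}}\geq f^\tau_{\vec{n}}\geq 0$ for all $\vec{n}$, and that $\bar{f}^{\tau+1}_{\vec{n}}$ also exists for all $\vec{n}$. Writing $S_\tau:=\sum_{k=0}^{\tau}(m_k)^\alpha$, the linearity of $\hat{M}$ together with the fact that the denominator in \eqref{4:supersolution} is spatially constant yields the exact relations
\[
\{\hat{M}(\bar{f}^\tau_{\vec{n}})\}^{-\alpha}=(h^{\tau+1}_{\vec{n}})^{-\alpha}(1-S_\tau),\qquad (\bar{f}^{\tau+1}_{\vec{n}})^{-\alpha}=(h^{\tau+1}_{\vec{n}})^{-\alpha}(1-S_{\tau+1}),
\]
so that
\[
\{\hat{M}(\bar{f}^\tau_{\vec{n}})\}^{-\alpha}-(\bar{f}^{\tau+1}_{\vec{n}})^{-\alpha}=(h^{\tau+1}_{\vec{n}})^{-\alpha}(m_{\tau+1})^\alpha\geq 1,
\]
where the inequality is immediate from $m_{\tau+1}=\sup_{\vec{n}'}h^{\tau+1}_{\vec{n}'}\geq h^{\tau+1}_{\vec{n}}$. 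This is the supersolution analogue of \eqref{eq:4.2}, and it is obtained without any convexity inequality such as Lemma \ref{lem4.2}; the whole point of the definition \eqref{4:mtau} is to make this pointwise bound free.

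Combining this identity with the induction hypothesis, which gives $\{\hat{M}(\bar{f}^\tau_{\vec{n}})\}^{-\alpha}\leq\{\hat{M}(f^\tau_{\vec{n}})\}^{-\alpha}$, we obtain $(\bar{f}^{\tau+1}_{\vec{n}})^{-\alpha}\leq\{\hat{M}(f^\tau_{\vec{n}})\}^{-\alpha}-1$. The left-hand side is positive since $\bar{f}^{\tau+1}_{\vec{n}}$ exists, hence $\{\hat{M}(f^\tau_{\vec{n}})\}^\alpha<1$ and $f^{\tau+1}_{\vec{n}}$ is well-defined by \eqref{eq:2.1} with $(f^{\tau+1}_{\vec{n}})^{-\alpha}=\{\hat{M}(f^\tau_{\vec{n}})\}^{-\alpha}-1\geq(\bar{f}^{\tau+1}_{\vec{n}})^{-\alpha}$, i.e., $f^{\tau+1}_{\vec{n}}\leq\bar{f}^{\tau+1}_{\vec{n}}$.

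The only delicate point is the degenerate case $h^{\tau+1}_{\vec{n}}=0$, where the reciprocals above are not defined; as in the proof of Proposition \ref{prop4.1}, the chain of equivalences $h^{\tau+1}_{\vec{n}}=0\Leftrightarrow h^\tau_{\vec{n}\pm\vec{e}_k}=0\ (\forall k)\Leftrightarrow\bar{f}^\tau_{\vec{n}\pm\vec{e}_k}=0\ (\forall k)$, together with the induction hypothesis $0\leq f^\tau_{\vec{n}\pm\vec{e}_k}\leq\bar{f}^\tau_{\vec{n}\pm\vec{e}_k}$, forces $\hat{M}(f^\tau_{\vec{n}})=0$, so that $f^{\tau+1}_{\vec{n}}=0=\bar{f}^{\tau+1}_{\vec{n}}$ and the inequality holds trivially. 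I do not expect a genuine obstacle in this proof: the main algebraic content is the spatially uniform denominator in \eqref{4:supersolution}, which converts what in Lemma \ref{lem4.1} was a nontrivial convexity argument into the essentially free supremum bound $h^{\tau+1}_{\vec{n}}\leq m_{\tau+1}$, making the whole proof considerably shorter than its subsolution counterpart.
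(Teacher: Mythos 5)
Your proof is correct and is essentially the paper's own argument: the paper's inductive step is exactly your algebra (the spatially constant denominator plus $m_{s+1}\ge h^{s+1}_{\vec{n}}$ giving $(\bar{f}^{s+1}_{\vec{n}})^{-\alpha}\le\{\hat{M}(\bar{f}^{s}_{\vec{n}})\}^{-\alpha}-1\le (g^{s}_{\vec{n}})^{-\alpha}-1$), and it likewise treats the zero case separately. The only nit is that besides $h^{\tau+1}_{\vec{n}}=0$ you should also dispose of the subcase $h^{\tau+1}_{\vec{n}}>0$ but $\hat{M}(f^{\tau}_{\vec{n}})=0$ (where $\{\hat{M}(f^{\tau}_{\vec{n}})\}^{-\alpha}$ is undefined but $f^{\tau+1}_{\vec{n}}=0$ makes the claim trivial), as the paper does explicitly.
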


%
%
\begin{proof}
We give the proof by induction on $\tau$.
When $\tau=0$, by definition of the initial value problem,  $f^0_{\vec{n}}$ exists and \eqref{5:prepineq} holds because
\begin{equation*}
\displaystyle{\bar{f}^0_{\vec{n}} = \frac{h^0_{\vec{n}}}{\{1-(m_0)^\alpha\}^{1/\alpha}} \geq h^0_{\vec{n}} = f^0_{\vec{n}}}.
\end{equation*}
Suppose that the statement is true up to $\tau=s$ and $\bar{f}^{s+1}_{\vec{n}}$ exists.
When $\bar{f}^{s+1}_{\vec{n}}=0$, 
\begin{eqnarray*}
\bar{f}^{s+1}_{\vec{n}}=0 &\  \Longleftrightarrow \ &h_{\vec{n}}^{s+1}=0\\
&\  \Longleftrightarrow \ &\hat{M}(h_{\vec{n}}^{s})=0\\
&\  \Longleftrightarrow \ &h_{\vec{n}\pm \vec{e}_k}^{s}=0 \quad (k=1,2,\ldots, d)\\
&\  \Longleftrightarrow \ &\bar{f}_{\vec{n}\pm \vec{e}_k}^{s}=0 \quad (k=1,2,\ldots, d)\\
&\  \Longrightarrow \ &f_{\vec{n}\pm \vec{e}_k}^{s}=0 \quad (k=1,2,\ldots, d)\\
&\  \Longleftrightarrow \ &g_{\vec{n}}^{s}=0 \\
&\  \Longleftrightarrow \ &f_{\vec{n}}^{s+1}=0.
\end{eqnarray*}
Hence  \eqref{5:prepineq} holds.

When $\bar{f}^{s+1}_{\vec{n}}>0$, if $g_{\vec{n}}^s = 0 $, then $f_{\vec{n}}^{s+1}=0$ and the statement is true.
Otherwise
\begin{eqnarray*}
0<(\bar{f}^{s+1}_{\vec{n}})^{-\alpha} &=& \frac{1-\sum\limits^{s+1}_{k=0}{(m_k)^\alpha}}{(h^{s+1}_{\vec{n}})^\alpha} = \frac{1-\sum\limits^s_{k=0}{(m_k)^\alpha}}{(h^{s+1}_{\vec{n}})^\alpha} - \left(\frac{m_{s+1}}{h^{s+1}_{\vec{n}}}\right)^\alpha \\
&\leq& \frac{1-\sum\limits^s_{k=0}{(m_k)^\alpha}}{\left\{\hat{M}(h^s_{\vec{n}})\right\}^\alpha} - 1 =\frac{1}{\left\{\hat{M}(\bar{f}^s_{\vec{n}})\right\}^\alpha} - 1 \\
&\leq& (g^s_{\vec{n}})^{-\alpha} - 1.
\end{eqnarray*}
From \eqref{eq:2.1}, $(g^s_{\vec{n}})^{-\alpha} - 1=(f^{s+1}_{\vec{n}})^{-\alpha}$ and we find 
 $(\bar{f}^{s+1}_{\vec{n}})^{-\alpha} \leq (f^{s+1}_{\vec{n}})^{-\alpha}$,  i.e. $f^{s+1}_{\vec{n}} \leq \bar{f}^{s+1}_{\vec{n}}$.
Thus, from the induction hypothesis, the statement  is true for any non-negative integer $\tau$.
\end{proof}

%
%
%
Now we prove the statement$(3)$ of theorem \ref{th2.1}.

{\em Proof of theorem \ref{th2.1}  (3).} \ 

From \eqref{eq:4.6}, we obtain the asymptotic behaviour of $m_\tau$ as
\begin{equation*}
m_\tau \sim \frac{C}{\sqrt{\tau^d}} \qquad (\tau\to+\infty).
\end{equation*}
Hence with a fixed $\tau_0\in\mathbb{Z}_{{}\geq 0}$,  we can evaluate $\bar{f}^\tau_{\vec{n}}$ as
\begin{equation*}
\displaystyle{\bar{f}^\tau_{\vec{n}} \sim \frac{h^\tau_{\vec{n}}}{\left(1-\sum\limits^{\tau_0}_{k=0}{m_k} - \sum\limits^\tau_{k=\tau_0+1}\displaystyle{\frac{C^\alpha}{k^{d\alpha/2}}}\right)^{1/\alpha}} \qquad (\tau\to+\infty)}.
\end{equation*}
Since $d\alpha/2>1$,  we have $\displaystyle \sum_{k=\tau_0+1}^\infty \frac{1}{k^{d \alpha/2}} <+\infty   $.
Noticing the fact $\left\|  h^\tau \right\|_1 = \left\| f^0 \right\|_1$,  the term $\sum\limits^{\tau_0}_{k=0}{m_k}$ and the constant $C$ can be as small as possible
by choosing a small value of $\left\| f^0\right\|_1$.
Thus $\bar{f}^\tau_{\vec{n}}$ exists at any time step $\tau$ and any lattice point $\vec{n}$, if $\left\| f^0 \right\|_1$ is sufficiently small.
Then, from proposition $\ref{prop5.1}$, we find that $f^\tau_{\vec{n}}$ is a global solution of \eqref{eq:2.1}.
$\qquad\endproof$
%
%
%
%
%
%
%
\section{ Proof of theorem \ref{th2.1} for $\bm{ \alpha=2/d}$}\label{sec5}
%
In this section, we prove the statement (2) of \ref{th2.1}.
The idea of the proof is similar to that adopted by Weissler$\cite{Weissler}$.

%
%
We define the discrete Green function $G$ and the function $\tilde{f}$ as follows.
\begin{equation}\label{5:Gfunction}
G^\tau_{\vec{n}} := \left\{
\begin{array}{cl}
0 & \mbox{$(\tau = 0)$} \\
U^{\tau-1}_{\vec{n}} & \mbox{$(\tau\geq 1)$}
\end{array}
\right. ,
\end{equation}
\begin{equation}\label{5:tildeffunction}
\tilde{f}^\tau_{\vec{n}} := 
\left\{
\begin{array}{cl}
0 &\; \mbox{$(\tau = 0)$} \\
\sum\limits^{\tau-1}_{s=0}\sum\limits_{\vec{n}^\prime}{G^{\tau-s}_{\vec{n}-\vec{n}^\prime}H(g^s_{\vec{n}^\prime})}  &\; \mbox{$(\tau\geq 1)$}
\end{array}
\right.,
\end{equation}
where 
\begin{equation}
H(g):=\frac{g}{(1-g^{\alpha})^{1/\alpha}}-g.
\label{5:gdef}
\end{equation}

Since
\[
f^{\tau+1}_{\vec{n}} - g^\tau_{\vec{n}} 
= \frac{g^\tau_{\vec{n}}}{\{ 1-(g^\tau_{\vec{n}})^\alpha\}^{1/\alpha}} - g^\tau_{\vec{n}} = H(g^\tau_{\vec{n}}),
\]
the initial value problem \eqref{eq:2.1} can be rewritten as
\begin{equation}
\begin{cases}
(\hat{T}-\hat{M})f^\tau_{\vec{n}} = H(g^\tau_{\vec{n}}) \\
f^{0}_{\vec{n}} = a (\vec{n}) \ge 0 \quad (a (\vec{n})  \not\equiv 0)\label{eq:6.2},
\end{cases}
\end{equation}
where $\hat{T}$ denotes the time shift: $\hat{T} y^\tau:=y^{\tau+1}$.

For $\tau \ge 1$, 
\[
(\hat{T}-\hat{M})G^{\tau}_{\vec{n}} = G^{\tau+1}_{\vec{n}} - \hat{M}(G^\tau_{\vec{n}}) 
= U^\tau_{\vec{n}} - \hat{M}(U^{\tau-1}_{\vec{n}}) = 0,
\]
and for $\tau=0$, 
\[
(\hat{T}-\hat{M})G^0_{\vec{n}} = G^1_{\vec{n}} - \hat{M}(G^0_{\vec{n}}) 
= U^0_{\vec{n}} = \delta_{\vec{n},0}.
\]
Thus we find
\begin{equation}
(\hat{T}-\hat{M})G^\tau_{\vec{n}} = \delta_{\tau,0}\delta_{\vec{n},0}. \label{eq:6.3}
\end{equation}

We also find
\begin{equation}
(\hat{T}-\hat{M})\tilde{f}^\tau_{\vec{n}} = H(g^\tau_{\vec{n}}) \label{eq:6.4}.
\end{equation}
Because
\begin{eqnarray*}
(\hat{T}-\hat{M})\tilde{f}^\tau_{\vec{n}} &=& \tilde{f}^{\tau+1}_{\vec{n}} - \hat{M}(\tilde{f}^\tau_{\vec{n}}) \\
&=& \sum\limits^\tau_{s=0}\sum\limits_{\vec{n}^\prime}{G^{\tau+1-s}_{\vec{n}-\vec{n}^\prime}H(g^s_{\vec{n}^\prime})} + \sum\limits^{\tau-1}_{s=0}\sum\limits_{\vec{n}^\prime}{\hat{M}(G^{\tau-s}_{\vec{n}-\vec{n}^\prime}H(g^s_{\vec{n}^\prime}))} \\
&=& \sum\limits_{\vec{n}^\prime}{G^1_{\vec{n}-\vec{n}^\prime}H(g^\tau_{\vec{n}^\prime})} + \sum\limits^{\tau-1}_{s=0}\sum\limits_{\vec{n}^\prime}{(\hat{T}-\hat{M})\{G^{\tau-s}_{\vec{n}-\vec{n}^\prime} H(g^s_{\vec{n}^\prime})\}} \\
&=& \sum\limits_{\vec{n}^\prime}{U^0_{\vec{n}-\vec{n}^\prime}H(g^\tau_{\vec{n}^\prime})}+\sum\limits^{\tau-1}_{s=0}\sum\limits_{\vec{n}^\prime}{\{\delta_{\tau-s,0}\delta_{\vec{n}-\vec{n}^\prime,0}H(g^s_{\vec{n}^\prime})\}} \\
&=& H(g^\tau_{\vec{n}}).
\end{eqnarray*}




%
%
Using the above results, we can prove the following proposition.
\begin{proposition}\label{prop6.1}
When $f_{\vec{n}}^\tau$ is a global solution of \eqref{eq:2.1}
\begin{equation}\label{eq:6.1}
f^\tau_{\vec{n}} =h^\tau_{\vec{n}} + \sum\limits_{s=1}^\tau\sum\limits_{\vec{n}^\prime}{U^{\tau-s}_{\vec{n}-\vec{n}^\prime}H(g^{s-1}_{\vec{n}^\prime})}.
\end{equation}
Here $h^\tau_{\vec{n}} $ is defined in \eqref{2:hfunction}.
\end{proposition}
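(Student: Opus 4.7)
The plan is to show that the function $w^\tau_{\vec{n}} := f^\tau_{\vec{n}} - h^\tau_{\vec{n}} - \tilde{f}^\tau_{\vec{n}}$ vanishes identically, and then to re-index $\tilde{f}$ into the form displayed in \eqref{eq:6.1}. The key ingredients are already at hand: $h^\tau_{\vec{n}}$ solves the homogeneous equation $(\hat{T}-\hat{M})h^\tau_{\vec{n}}=0$ by \eqref{2:eq.h}, $\tilde{f}^\tau_{\vec{n}}$ solves the inhomogeneous equation $(\hat{T}-\hat{M})\tilde{f}^\tau_{\vec{n}}=H(g^\tau_{\vec{n}})$ by \eqref{eq:6.4}, and $f^\tau_{\vec{n}}$ itself satisfies $(\hat{T}-\hat{M})f^\tau_{\vec{n}}=H(g^\tau_{\vec{n}})$ by \eqref{eq:6.2}. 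Subtracting gives $(\hat{T}-\hat{M})w^\tau_{\vec{n}}=0$.

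Next I would verify the initial condition. At $\tau=0$ we have $h^0_{\vec{n}}=f^0_{\vec{n}}$ by \eqref{2:hfunction} and $\tilde{f}^0_{\vec{n}}=0$ by \eqref{5:tildeffunction}, hence $w^0_{\vec{n}}=0$ for every $\vec{n}$. Since $w^{\tau+1}_{\vec{n}} = \hat{M}(w^\tau_{\vec{n}})$ and $w^0\equiv 0$, an immediate induction on $\tau$ gives $w^\tau_{\vec{n}}=0$ for all $\tau\ge 0$ and all $\vec{n}\in\mathbb{Z}^d$. Note that the global-solution hypothesis is what guarantees that each $H(g^s_{\vec{n}'})$ used in $\tilde{f}$ is well defined and finite; without it, the linear decomposition would not make sense.

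To finish, I would rewrite $\tilde{f}^\tau_{\vec{n}}$ explicitly. For $\tau\ge 1$, by \eqref{5:tildeffunction} and \eqref{5:Gfunction},
\[
\tilde{f}^\tau_{\vec{n}} = \sum_{s=0}^{\tau-1}\sum_{\vec{n}'} G^{\tau-s}_{\vec{n}-\vec{n}'}\,H(g^s_{\vec{n}'}) = \sum_{s=0}^{\tau-1}\sum_{\vec{n}'} U^{\tau-s-1}_{\vec{n}-\vec{n}'}\,H(g^s_{\vec{n}'}),
\]
and substituting $s'=s+1$ turns the sum into $\sum_{s'=1}^{\tau}\sum_{\vec{n}'}U^{\tau-s'}_{\vec{n}-\vec{n}'}H(g^{s'-1}_{\vec{n}'})$, which is precisely the double sum in \eqref{eq:6.1}. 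For $\tau=0$ the formula \eqref{eq:6.1} reduces to $f^0_{\vec{n}}=h^0_{\vec{n}}$ since the sum is empty, which is again true by definition.

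I do not expect any real obstacle here: all of the nontrivial work has been done in the identities \eqref{eq:6.3} and \eqref{eq:6.4} that precede the proposition. The only points requiring a little care are the index shift relating $G$ to $U$ (which is the whole reason for introducing $G$ in \eqref{5:Gfunction}) and the fact that the proof is one of uniqueness of solutions to the linear Cauchy problem $(\hat{T}-\hat{M})w=0$, $w^0=0$, rather than a direct computation.
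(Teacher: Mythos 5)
Your proposal is correct and follows essentially the same route as the paper: both verify that $h^\tau_{\vec{n}}+\tilde{f}^\tau_{\vec{n}}$ satisfies the same initial value problem as $f^\tau_{\vec{n}}$ (using \eqref{2:eq.h} and \eqref{eq:6.4}) and conclude by uniqueness of the solution to that Cauchy problem, the index shift turning $G$ into $U$ being the only bookkeeping step. Your explicit subtraction $w=f-h-\tilde f$ with induction on $\tau$ merely spells out the uniqueness argument the paper invokes in one line.
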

%
%
%
\begin{proof}
First we note that $\displaystyle  \tilde{f}^\tau_{\vec{n}}=\sum\limits_{s=1}^\tau\sum\limits_{\vec{n}^\prime}{U^{\tau-s}_{\vec{n}-\vec{n}^\prime}H(g^{s-1}_{\vec{n}^\prime})} $, and the right hand side of \eqref{eq:6.1} $= h^\tau_{\vec{n}} + \tilde{f}^\tau_{\vec{n}}$.
By definition  $h^0_{\vec{n}} = f^0_{\vec{n}}$ and $ \tilde{f}^0_{\vec{n}}=0$.
From \eqref{2:eq.h} and \eqref{eq:6.4},
\begin{equation*}
\begin{cases}
(\hat{T}-\hat{M})(h^\tau_{\vec{n}}+\tilde{f}^\tau_{\vec{n}}) = H(g^\tau_{\vec{n}}) \\
h^0_{\vec{n}}+\tilde{f}^0_{\vec{n}} = f^0_{\vec{n}}.
\end{cases}
\end{equation*}
Thus, from \eqref{eq:6.2},  $h^\tau_{\vec{n}}+\tilde{f}^\tau_{\vec{n}}$  satisfies the same initial value problem as that for $f^\tau_{\vec{n}}$. 
Since the solution of the initial value problem \eqref{eq:6.2} (\eqref{eq:2.1}) is unique, $f^\tau_{\vec{n}} =h^\tau_{\vec{n}}+ \tilde{f}^\tau_{\vec{n}}$. 
\end{proof}

%
%
%
\begin{proposition}\label{prop6.2}
Suppose that $\alpha = 2/d$ and that $f^\tau_{\vec{n}} $ is a global solution of $(\ref{eq:2.1})$. 
Then there exists a constant $C_0$ and 
\begin{equation*}
\| f^\tau\|_1 \leq C_0.
\end{equation*}
for all $\tau \in \mathbb{Z}_+$.
\end{proposition}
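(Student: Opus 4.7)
The plan is to combine the Duhamel-type representation of proposition~\ref{prop6.1} with the sharp heat-kernel asymptotics of proposition~\ref{prop3.2} in order to bootstrap a uniform $\ell^1$ bound. First I take $\ell^1$-norms in the representation
\[
f^\tau_{\vec{n}} = h^\tau_{\vec{n}} + \sum_{s=1}^\tau\sum_{\vec{n}'}U^{\tau-s}_{\vec{n}-\vec{n}'}H(g^{s-1}_{\vec{n}'}).
\]
Since $U^k$ is the $k$-step transition kernel of a symmetric random walk on $\mathbb{Z}^d$ one has $\sum_{\vec{n}}U^k_{\vec{n}}=1$, and all the summands above are non-negative, so Young's convolution inequality gives the telescoping identity
\[
\|f^\tau\|_1 = \|f^0\|_1 + \sum_{s=0}^{\tau-1}\|H(g^s)\|_1.
\]
Hence $A_\tau:=\|f^\tau\|_1$ is monotone non-decreasing, and the task reduces to showing $\sum_{s\ge 0}\|H(g^s)\|_1<\infty$.

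Next I estimate each increment by an $\ell^\infty$ bound on $g^s$. Writing $H(g)=f\{1-(1-g^\alpha)^{1/\alpha}\}$ with $f=g/(1-g^\alpha)^{1/\alpha}$, and using Bernoulli's inequality (which gives $(1-y)^{1/\alpha}\ge 1-y/\alpha$ when $\alpha\le 1$, and the elementary inequality $(1-y)^{1/\alpha}\ge 1-y$ when $\alpha\ge 1$), I obtain the pointwise bound $H(g)\le C_\alpha f g^\alpha$ with $C_\alpha=\max(1,1/\alpha)$, hence
\[
\|H(g^s)\|_1 \le C_\alpha \|g^s\|_\infty^\alpha A_{s+1}.
\]
Applying $\hat{M}$ to the Duhamel formula and using $\hat{M}U^k=U^{k+1}$ together with proposition~\ref{prop3.2} yields
\[
\|g^s\|_\infty \le C(s+1)^{-d/2}\|f^0\|_1 + C\sum_{r=0}^{s-1}(s-r)^{-d/2}\|H(g^r)\|_1.
\]
These two inequalities constitute a self-referential system for the pair $(A_\tau,\|g^\tau\|_\infty)$ which I would then iterate.

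The main obstacle is the critical scaling $\alpha=2/d$: the linear contribution to $\|g^s\|_\infty$ decays exactly like $s^{-d/2}$, so $\|g^s\|_\infty^\alpha$ is only of order $s^{-1}$, i.e.\ borderline non-summable. A direct discrete Gronwall argument therefore yields at best $A_\tau = O(\log^{\kappa}\tau)$ rather than a uniform bound. To close the estimate I would follow the strategy of Weissler~\cite{Weissler} for the critical continuous semilinear heat equation: pick a time $\tau^\ast=\tau^\ast(\|f^0\|_1)$ large enough that both $\|g^s\|_\infty^\alpha$ for $s\ge\tau^\ast$ and the convolution tail appearing in its estimate become small compared with $1/C_\alpha$, run a contraction / fixed-point argument on $[\tau^\ast,\infty)$ so as to obtain $A_\tau\le 2A_{\tau^\ast}$ there, and finally combine this with the trivial bound on $[0,\tau^\ast]$ to produce the uniform constant $C_0$ depending only on the initial data.
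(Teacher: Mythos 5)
Your opening reduction is correct as far as it goes: summing \eqref{eq:6.1} over $\vec{n}$, using $\sum_{\vec{n}}U^k_{\vec{n}}=1$ and the non-negativity of $H(g^s_{\vec{n}})$, indeed gives $\|f^\tau\|_1=\|f^0\|_1+\sum_{s=0}^{\tau-1}\|H(g^s)\|_1$, so the proposition is equivalent to the summability of $\|H(g^s)\|_1$. But the mechanism you propose for that summability cannot be closed, and you have in effect diagnosed why yourself. For a global solution one has $g^s_{\vec{n}}\ge \hat M(h^s_{\vec{n}})=h^{s+1}_{\vec{n}}\ge c\,U^{s+1}_{\vec{n}-\vec{n}_0}$ (lemma \ref{lem6.3}), so by proposition \ref{prop3.2} the quantity $\|g^s\|_\infty^{2/d}$ is bounded \emph{below} by $c's^{-1}$ for large $s$: the term you need to make ``small compared with $1/C_\alpha$'' on a tail $[\tau^\ast,\infty)$ has a divergent sum on \emph{every} tail, so no choice of $\tau^\ast$ yields a contraction producing $A_\tau\le 2A_{\tau^\ast}$. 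Indeed $\sum_s\|H(g^s)\|_1\ge\tfrac1\alpha\sum_s\sum_{\vec{n}}(h^s_{\vec{n}})^{1+\alpha}$, and the logarithmic divergence of this very series is what the paper exploits immediately afterwards to complete the proof of theorem \ref{th2.1}(2). The a priori bound of proposition \ref{prop6.2} therefore cannot be extracted from a perturbative bootstrap of the Duhamel series; it has to come from an independent structural input, which your sketch does not supply.

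The paper's own proof uses exactly such an input and is much shorter. Since $f^\tau_{\vec{n}}$ is global, proposition \ref{prop4.1} guarantees that the subsolution $\underline{f}^\tau_{\vec{n}}$ exists for all $\tau$, i.e. $1-\tau(h^\tau_{\vec{n}})^\alpha>0$; for $\alpha=2/d$ this reads $(\tau^{d/2}h^\tau_{\vec{n}})^{2/d}<1$, and letting $\tau\to\infty$ with the asymptotics $\tau^{d/2}h^\tau_{\vec{n}}\to 2(d/4\pi)^{d/2}\|f^0\|_1$ forces $\|f^0\|_1\le\tfrac12(4\pi/d)^{d/2}$. Restarting the equation at an arbitrary time $\tau$ (the shifted solution is again global) gives the same universal bound for $\|f^\tau\|_1$, which is the desired $C_0$. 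You should replace the bootstrap by this subsolution argument.
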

%
%
\begin{proof}
From proposition \ref{prop4.1}  and the assumption of the proposition,  $\underline{f}^\tau_{\vec{n}}$ also exists for all $(\tau, \vec{n})$ and 
\begin{equation}
1-(\tau^{d/2}h^\tau_{\vec{n}})^\alpha > 0\label{eq:6.6}.
\end{equation}
On the other hand, from proposition \ref{prop3.2},
\begin{equation*}
\tau^{d/2}h^\tau_{\vec{n}} \ \sim \ \sum\limits_{\vec{n}}{2\left(\frac{d}{4\pi}\right)^{d/2}f^0_{\vec{n}}} \, = \, 2\left(\frac{d}{4\pi}\right)^{d/2}\| f^0 \|_1 \qquad (\tau\to+\infty).
\end{equation*}
Then from $(\ref{eq:6.6})$,
\begin{equation*}
\left\{2\left(\frac{d}{4\pi}\right)^{d/2}\| f^0 \|_1 \right\}^\alpha < 1.
\end{equation*}
Therefore,
\begin{equation*}
\| f^0 \|_1 < \frac{1}{2}\left(\frac{4\pi}{d}\right)^{d/2}.
\end{equation*}
Because $f^\tau_{\vec{n}}$ is a global solution, if we take $f^\tau_{\vec{n}}$ at any time step $\tau$ as the initial value of $(\ref{eq:2.1})$, 
the solution is also a global solution. 
Therefore we obtain
\begin{equation*}
\| f^\tau \|_1 < \frac{1}{2}\left(\frac{4\pi}{d}\right)^{d/2}.
\end{equation*}
\end{proof}

%
%
\begin{lemma}\label{lem6.3}
When $f^\tau_{\vec{n}}$ is a global solution,
\begin{equation}\label{5:ineqhf}
h^\tau_{\vec{n}} \leq f^\tau_{\vec{n}}.
\end{equation}
\end{lemma}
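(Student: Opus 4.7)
The plan is to read off the inequality from the integral representation given in Proposition~\ref{prop6.1}. Since $f^\tau_{\vec{n}}$ is a global solution, Proposition~\ref{prop6.1} yields
\[
f^\tau_{\vec{n}} - h^\tau_{\vec{n}} = \sum_{s=1}^\tau \sum_{\vec{n}'} U^{\tau-s}_{\vec{n}-\vec{n}'}\, H(g^{s-1}_{\vec{n}'}),
\]
so it suffices to verify that every factor in every summand is non-negative.

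First I would check that $U^\sigma_{\vec{m}}\ge 0$ for all $\sigma\in\mathbb{Z}_{\ge0}$ and $\vec{m}\in\mathbb{Z}^d$. This is immediate by induction from \eqref{eq:4.1}: $U^0_{\vec{m}}=\delta_{0,\vec{m}}\ge 0$, and the recursion $U^{\sigma+1}_{\vec{m}}=\hat{M}(U^\sigma_{\vec{m}})$ is a convex combination of non-negative values and so preserves non-negativity. (In fact $U^\sigma_{\vec{m}}$ is the transition probability of the simple random walk, as invoked in Proposition~\ref{prop3.2}.)

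Next I would verify that $H(g^{s-1}_{\vec{n}'})\ge 0$ on the range relevant to a global solution. Because $f^\tau_{\vec{n}}\ge 0$ for all $(\tau,\vec{n})$ (an easy induction from \eqref{eq:2.1} using non-negativity of the initial data and of $\hat M$), we have $g^{s-1}_{\vec{n}'}=\hat{M}(f^{s-1}_{\vec{n}'})\ge 0$, and global existence forces $g^{s-1}_{\vec{n}'}<1$. On the interval $0\le g<1$ we have $(1-g^\alpha)^{1/\alpha}\le 1$, hence $\frac{g}{(1-g^\alpha)^{1/\alpha}}\ge g$, i.e.\ $H(g)\ge 0$ by \eqref{5:gdef}.

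Combining these two observations, every term in the double sum is non-negative, so $f^\tau_{\vec{n}}-h^\tau_{\vec{n}}\ge 0$, which is \eqref{5:ineqhf}. There is no real obstacle here: the lemma is essentially a positivity statement wrapped around the Duhamel-type formula of Proposition~\ref{prop6.1}, and the only thing to be careful about is confirming that global existence of $f^\tau_{\vec{n}}$ indeed keeps $g^{s-1}_{\vec{n}'}$ in the interval $[0,1)$ where $H$ is non-negative.
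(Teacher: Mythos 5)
Your proof is correct, but it takes a genuinely different route from the paper. The paper proves the lemma by direct induction on $\tau$: assuming $h^s_{\vec{n}}\le f^s_{\vec{n}}$, it uses monotonicity of $x\mapsto x(1-x^\alpha)^{-1/\alpha}$ on $[0,1)$ to get $f^{s+1}_{\vec{n}}\ge h^{s+1}_{\vec{n}}/(1-(h^{s+1}_{\vec{n}})^\alpha)^{1/\alpha}\ge h^{s+1}_{\vec{n}}$, where $h^{s+1}_{\vec n}=\hat M(h^s_{\vec n})$. You instead read the inequality off the Duhamel-type representation of Proposition~\ref{prop6.1}, observing that $U^\sigma_{\vec{m}}\ge0$ (it is a random-walk transition probability) and that $H(g)\ge0$ on $[0,1)$, which is exactly the range guaranteed by global existence. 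Your argument is sound and arguably more transparent about the structural reason the inequality holds (the nonlinearity acts as a non-negative source term added to the free evolution $h^\tau_{\vec n}$); it is also a strict weakening of the inequality \eqref{5:eqg} that the paper uses immediately afterwards. What the paper's induction buys in exchange is independence from Proposition~\ref{prop6.1} and its uniqueness argument: the inductive proof is completely elementary and would survive even if the lemma were stated before the Green-function machinery. Since Proposition~\ref{prop6.1} is established earlier in the section and its proof does not invoke the lemma, there is no circularity in your approach, and both proofs are acceptable.
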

%
%
\begin{proof}
We prove by induction for $\tau$.
\begin{equation*}
h^0_{\vec{n}} = \sum\limits_{\vec{n}^\prime}{U^0_{\vec{n}-\vec{n}^\prime}f^0_{\vec{n}^\prime}} = f^0_{\vec{n}}
\end{equation*}
and \eqref{5:ineqhf} holds for $\tau=0$.
Suppose that the statement is true for $\tau = s$.
then
\begin{eqnarray*}
f_{\vec{n}}^{s+1}&=&\frac{\hat{M}(f_{\vec{n}}^{s})}{\left(1-\hat{M}(f_{\vec{n}}^{s})^\alpha  \right)^{1/\alpha}}\\
&\ge &\frac{\hat{M}(h_{\vec{n}}^{s})}{\left(1-\hat{M}(h_{\vec{n}}^{s})^\alpha  \right)^{1/\alpha}}\\
&\ge &\hat{M}(h_{\vec{n}}^{s})=h_{\vec{n}}^{s+1}.
\end{eqnarray*}
Here we used the inequality
\[
0\le x\le y < 1 \; \Longrightarrow \; \frac{x}{\left(1-x^\alpha  \right)^{1/\alpha}} \le \frac{y}{\left(1-y^\alpha  \right)^{1/\alpha}}.
\]
Thus, by the induction hypothesis, \eqref{5:ineqhf} holds for any $\tau \in \mathbb{Z}_{{}_\ge 0}$.
\end{proof}

%
%
%
%
Now we prove the statement$(2)$ of theorem \ref{th2.1} .\\
{\em Proof of theorem \ref{th2.1}  (2).} \ 

Suppose that $f^\tau_{\vec{n}}$ is a global solution of \eqref{eq:2.1}.
From proposition \ref{prop6.1},
\begin{eqnarray*}
\| f^\tau \|_1 &\geq& \sum\limits^\tau_{s=1}\sum\limits_{\vec{n},\vec{n}^\prime}{U^{\tau-s}_{\vec{n}-{\vec{n}^\prime}}H(g^{s-1}_{\vec{n}^\prime})} \\
&=& \sum\limits^\tau_{s=1}\sum\limits_{\vec{n}}{H(g^{s-1}_{\vec{n}})}.
\end{eqnarray*}
Here we applied $\sum_{\vec{n}}{U^\tau_{\vec{n}}}=1$.
By an elementary computation we can easily show
\begin{equation}\label{5:eqg}
H(g) \geq \frac{1}{\alpha}g^{1+\alpha}     \qquad (0 \leq g < 1).
\end{equation}
Hence 
\begin{equation}\label{5:ineqfg}
\| f^\tau \|_1 \geq \sum\limits_{s=1}^\tau\sum\limits_{\vec{n}}{\frac{1}{\alpha}(g^{s-1}_{\vec{n}})^{1+\alpha}}.
\end{equation}

From \eqref{5:ineqhf},
\begin{equation*}
g^{s-1}_{\vec{n}} \geq \hat{M}(h^{s-1}_{\vec{n}}) = h^s_{\vec{n}}=\sum_{\vec{n}^\prime}f_{\vec{n}^\prime}^0U_{\vec{n}-\vec{n}^\prime}^s.
\end{equation*}
There is at least one $\vec{n}_0 \in \mathbb{Z}^d$ such that $f_{\vec{n}_0}^0 >0$, and we put $c:=f_{\vec{n}_0}^0>0$.
Then
\[
h^s_{\vec{n}} \ge  c  U^s_{\vec{n}-\vec{n}_0},
\]
and
\begin{eqnarray}
\| f^\tau \|_1 &\geq& \sum\limits^\tau_{s=1}\sum\limits_{\vec{n}}{\frac{1}{\alpha}(h^s_{\vec{n}})^{1+\alpha}} \nonumber\\
&\geq& c^{1+\alpha}\frac{1}{\alpha}\sum\limits_{s=1}^\tau\sum\limits_{\vec{n}}{(U^s_{\vec{n}})^{1+\alpha}}\label{eq:6.8}.
\end{eqnarray}
Moreover,  from proposition \ref{prop3.2}, we obtain the asymptotic behaviour
\begin{equation}
U^\tau_{\vec{n}} \sim \displaystyle{2\left(\frac{d}{4\pi}\right)^{d/2}\tau^{-d/2}\exp{\left(-\frac{|\vec{n}|^2}{4\tau}\right)}} \ (\tau\to+\infty)\label{eq:6.9}.
\end{equation}
Let  $\delta$ and $\xi$ be  $\displaystyle \delta:=\frac{1}{\tau}$ and $ \xi:=\sqrt{2d\delta}$ respectively.
From $(\ref{eq:6.9})$, we obtain the following evaluation for $\alpha=2/d$ and $\tau \to +\infty \; (\delta \to +0, \ \xi \to +0)$. 
\begin{eqnarray*}
\displaystyle{\sum\limits_{\vec{n}}{(U^\tau_{\vec{n}})^{1+2/d}}} &\sim& 
\displaystyle{\sum\limits_{\vec{n}}{2^{1+2/d}\left(\frac{2}{4\pi\tau}\right)^{1+d/2}\exp{\left\{-\frac{|\vec{n}|^2}{4\tau}\left(1+\frac{2}{d}\right)\right\}}}} \ (\tau\to+\infty) \\
&=& \displaystyle{2^{1+2/d}\frac{d}{4\pi\tau}\frac{\xi^d}{(8\pi\tau\delta)^{d/2}}\sum\limits_{\vec{n}}{\exp{\left\{-\frac{(\xi|\vec{n}|)^2}{8\tau\delta}\left(\frac{1}{d}+\frac{2}{d^2}\right)\right\}}}} \\
&\sim& \displaystyle{\frac{d2^{2/d}}{2\pi\tau}\frac{1}{(8\pi )^{d/2}}\int_{\mathbb{R}^d}{\exp{\left\{-\frac{|\vec{x}|^2}{8}\left(\frac{1}{d}+\frac{2}{d^2}\right)\right\}}d\vec{x}}}\\
&=&\displaystyle{\frac{d2^{2/d}}{2\pi}\left(\frac{1}{d}+\frac{2}{d^2}\right)^{-d/2}\frac{1}{\tau}}.
\end{eqnarray*}
Since $\sum\limits_{s=1}^{\infty}{1/s}$ diverges,  $\sum\limits_{s=1}^\tau\sum\limits_{\vec{n}}{(U^s_{\vec{n}})^{1+2/d}} $ can take an arbitrarily large value.
From $(\ref{eq:6.8})$,  $ \| f^\tau \|_1$ also becomes arbitrarily large, which contradicts with proposition \ref{prop6.2}. 
Therefore, when $\alpha=2/d$, there exists no global solution and we have completed  the proof of theorem \ref{th2.1}.
$\qquad\endproof$
%
%
%
%

\vspace*{2mm}
{\bf Acknowledgements}
The authors would like to thank Profs. Atsushi Nagai, Ralph Willox and Dr. Mikio Murata for useful comments and discussions.


\end{document}